\newcommand\A{\mathcal{A}}
\newcommand\NN{\mathbb{N}}
\newcommand\RR{\mathbb{R}}
\newcommand\MM{\mathcal{M}}
\newcommand\DD{\mathcal{D}}
\newcommand\KK{\mathcal{K}}
\newcommand\UU{\mathcal{U}}
\newcommand\WW{\mathcal{W}}
\newcommand\YY{\mathcal{Y}}
\DeclareMathOperator\diam{diam}
\renewcommand{\int}{\operatorname{int}}
\newtheorem{theorem}{Theorem}[section]
\newtheorem{definition}[theorem]{Definition}
\newtheorem{corollary}[theorem]{Corollary}
\newtheorem{lemma}[theorem]{Lemma}
\newtheorem{remark}[theorem]{Remark}
\newtheorem{proposition}[theorem]{Proposition}
\newtheorem{question}[theorem]{Question}
\numberwithin{equation}{section}
\newcommand{\address}{Address: Department of Mathematics, University of North Texas, 1155 Union Circle \#311430, Denton, TX 76203-5017, USA; E-mail: allaart@unt.edu}
\title{On univoque and strongly univoque sets}
\author{Pieter C. Allaart \footnote{\address}}
\begin{document}

\maketitle

\begin{abstract}
Much has been written about expansions of real numbers in noninteger bases. Particularly, for a finite alphabet $\{0,1,\dots,\alpha\}$ and a real number (base) $1<\beta<\alpha+1$, the so-called {\em univoque set} of numbers which have a unique expansion in base $\beta$ has garnered a great deal of attention in recent years. Motivated by recent applications of $\beta$-expansions to Bernoulli convolutions and a certain class of self-affine functions, we introduce the notion of a {\em strongly univoque} set. We study in detail the set $\DD_\beta$ of numbers which are univoque but not strongly univoque. Our main result is that $\DD_\beta$ is nonempty if and only if the number $1$ has a unique nonterminating expansion in base $\beta$, and in that case, $\DD_\beta$ is uncountable. We give a sufficient condition for $\DD_\beta$ to have positive Hausdorff dimension, and show that, on the other hand, there are infinitely many values of $\beta$ for which $\DD_\beta$ is uncountable but of Hausdorff dimension zero.

\bigskip
{\it AMS 2010 subject classification}: 11A63 (primary),  28A78 (secondary)

\bigskip
{\it Key words and phrases}: Beta-expansion, univoque set, strongly univoque set, Hausdorff dimension, $p$-mirror sequence.
\end{abstract}

\section{Introduction and main results}

In the last 25 years or so, there has been significant interest in expansions of real numbers in noninteger bases; see \cite{Komornik} for an excellent survey of the pre-2011 literature. In the general setting considered here, we have a finite alphabet $A:=\{0,1,\dots,\alpha\}$, where $\alpha\in\NN$ is fixed, and a real number $\beta$ with $1<\beta<\alpha+1$. For $x\in\RR$, we call an expression of the form
\begin{equation}
x=\sum_{j=1}^\infty \frac{\omega_j}{\beta^j}, \qquad\mbox{where $\omega_1,\omega_2,\dots \in A$}
\label{eq:beta-expansion}
\end{equation}
an {\em expansion of $x$ in base $\beta$} (or simply, a {\em $\beta$-expansion}). By a classical result of R\'enyi \cite{Renyi} such an expansion of $x$ exists if and only if $x\in J_\beta':=[0,\alpha/(\beta-1)]$. Observe that we allow for the possibility that $\beta\leq\alpha$, in which case digits larger than the base $\beta$ may be used. It is well known (see \cite{Sidorov}) that almost every $x\in J_\beta'$ has a continuum of $\beta$-expansions, but there are also many points having a unique expansion. These include, trivially, the endpoints $0$ and $\alpha/(\beta-1)$ of $J_\beta'$. We will exclude these points from consideration here and define the set
\begin{equation}
\A_\beta':=\{x\in \int(J_\beta'): x\ \mbox{has a {\em unique} expansion of the form \eqref{eq:beta-expansion}}\},
\label{eq:univoque-set}
\end{equation}
which has been called a {\em univoque} set and is known to have a complicated structure. Specifically (see \cite{GlenSid,KongLi,KLD}), there is a number $G:=G(\alpha)>1$ (introduced in \cite{Baker} and called a {\em generalized golden ratio}) and a critical value $\beta_c:=\beta_c(\alpha)\in(G,\alpha+1)$, called the {\em Komornik-Loreti constant}, such that $\A_\beta'$ is empty for $\beta\leq G$; nonempty but countable for $G<\beta<\beta_c$; uncountable but with Hausdorff dimension zero for $\beta=\beta_c$; and of positive Hausdorff dimension for $\beta>\beta_c$. The Komornik-Loreti constant was introduced in the papers \cite{KomLor1,KomLor2}.

The set $\A_\beta'$ is symmetric (i.e. invariant under the map $x\mapsto \bar{x}:=\frac{\alpha}{\beta-1}-x$), as can be seen by replacing each $\omega_j$ with $\alpha-\omega_j$ in \eqref{eq:beta-expansion}. We will be interested in a subset of $\A_\beta'$ defined by a slightly stronger condition, explained below, which is motivated by some recent applications of $\beta$-expansions. For ease of presentation, we initially restrict ourselves to the interval $J_\beta:=[(\alpha-\beta+1)/(\beta-1),1]$, and write $\A_\beta:=\A_\beta'\cap \int(J_\beta)$. Note that this set is again invariant under the map $x\mapsto \bar{x}$. Later we will show that all our results hold also when $\A_\beta$ is replaced with the larger set $\A_\beta'$.

Observe that the interval $J_\beta$ is nonempty if and only if $\beta\geq 1+\alpha/2$. However, as shown in \cite{Baker}, $G(\alpha)\geq 1+\alpha/2$ for each $\alpha$, and therefore $\A_\beta'=\emptyset$ when $\beta\leq 1+\alpha/2$, so this range of bases is not of interest for us here.

The set $\A_\beta$ is most easily studied by considering the set of all corresponding sequences $\omega_1\omega_2\cdots$. 
Let $\Omega:=A^\NN$. For $1<\beta<\alpha+1$, let $\Pi_\beta:\Omega\to\RR$ denote the projection map given by
\begin{equation*}
\Pi_\beta(\omega)=\sum_{j=1}^\infty \frac{\omega_j}{\beta^j}, \qquad \omega=\omega_1\omega_2\cdots \in\Omega,
\end{equation*}
so that \eqref{eq:beta-expansion} can be written compactly as $x=\Pi_\beta(\omega)$. Define the set
\begin{equation}
\UU_\beta:=\Pi_\beta^{-1}(\A_\beta).
\label{eq:definition-of-U}
\end{equation}
Let $\sigma$ denote the left shift map on $\Omega$; that is, $\sigma(\omega_1\omega_2\cdots)=\omega_2\omega_3\cdots$. For a number $a\in A$, let $\bar{a}:=\alpha-a$ denote the reflection of $a$. For $\omega=\omega_1\omega_2\cdots\in\Omega$, likewise denote $\bar{\omega}=\bar{\omega}_1\bar{\omega}_2\cdots$, and similarly define the reflection for finite words.
It is a straightforward consequence of the definitions that
\begin{equation}
\omega\in\UU_\beta \qquad \Longleftrightarrow \qquad \Pi_\beta(\sigma^n(\omega))<1\ \mbox{and}\ \Pi_\beta(\sigma^n(\bar{\omega}))<1\ \mbox{for all $n\geq 0$}.
\end{equation}
(See \cite{DK1} or \cite[Lemma 5.1]{Allaart} for the case $\alpha=1$.)
In some recent applications of $\beta$-expansions \cite{JSS,Allaart}, it was necessary to consider the (conceivably smaller) set
\begin{equation}
\widetilde{\UU}_\beta:=\{\omega\in\UU_\beta: \limsup_{n\to\infty}\Pi_\beta(\sigma^n(\omega))<1\ \mbox{and}\ \limsup_{n\to\infty}\Pi_\beta(\sigma^n(\bar{\omega}))<1\}.
\label{eq:alternative-representation-of-U-tilde}
\end{equation}
We shall call elements of $\widetilde{\UU}_\beta$ and their projections under $\Pi_\beta$ {\em strongly univoque}, and refer to $\widetilde{\UU}_\beta$ and its projection $\Pi_\beta(\widetilde{\UU}_\beta)\subset \A_\beta$ as {\em strongly univoque sets}. In \cite{JSS}, the sets $\widetilde{\UU}_\beta$ were used to study the multifractal spectra of asymmetric Bernoulli convolutions. In \cite{Allaart}, the present author studied a one-parameter family $\{F_a: 0<a<1\}$ of self-affine functions introduced by Okamoto \cite{Okamoto}, and showed that for $a>1/2$, the set of points where $F_a$ has an infinite derivative is intimately connected to the sets $\UU_\beta$ and $\widetilde{\UU}_\beta$, where $\beta=1/a$. The characterization of these infinite derivatives is most elegant in the case where $\widetilde{\UU}_\beta=\UU_\beta$. 

Motivated by this second application, we define the sets
\begin{equation*}
\WW_\beta:=\UU_\beta\backslash \widetilde{\UU}_\beta, \qquad 1<\beta<\alpha+1,
\end{equation*}
and put $\DD_\beta:=\Pi_\beta(\WW_\beta)$. Thus, $\DD_\beta$ is the set of numbers in $J_\beta$ which are univoque but not strongly univoque.
We aim to characterize the values of $\beta$ for which $\DD_\beta$ is nonempty, and, if it is, to investigate its cardinality and Hausdorff dimension.
Since we will see in Section \ref{sec:prelim} that the restriction of the map $\Pi_\beta$ to $\UU_\beta$ is bi-Lipschitz continuous (with respect to the metric $\rho_\beta$ defined in \eqref{eq:metric} below), the sets $\DD_\beta$ and $\WW_\beta$ have the same cardinality and Hausdorff dimension. Thus, all statements given below about $\WW_\beta$ hold for $\DD_\beta$ as well.

Our first result, an easy consequence of known facts from the literature, says that for most $\beta$, all univoque numbers are in fact strongly univoque.

\begin{proposition} \label{prop:smallness}
The set $\{\beta\in(1,\alpha+1): \WW_\beta\neq\emptyset\}$ is nowhere dense and of Lebesgue measure zero. Moreover, $\WW_\beta=\emptyset$ when $\beta<\beta_c$.
\end{proposition}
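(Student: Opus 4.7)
The plan is to prove the containment
\[
\{\beta\in(1,\alpha+1): \WW_\beta\ne\emptyset\} \subseteq \overline{\mathbf{V}},
\]
where $\mathbf{V}=\{\beta\in(1,\alpha+1): 1 \text{ has a unique } \beta\text{-expansion}\}$ is the set of univoque bases, and to then invoke the well-known fact (see \cite{KLD}) that $\overline{\mathbf{V}}$ is a Lebesgue-null, nowhere dense subset of $(1,\alpha+1)$ whose infimum is $\beta_c$.

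Fix $\omega\in\WW_\beta$. Since $\omega\mapsto\bar\omega$ preserves $\UU_\beta$ and swaps the two limsup conditions defining $\widetilde\UU_\beta$, I may assume $\limsup_{n\to\infty}\Pi_\beta(\sigma^n\omega)=1$. Extract a subsequence with $\Pi_\beta(\sigma^{n_k}\omega)\to 1$, and, using compactness of $\Omega$ in the product topology, refine so that $\sigma^{n_k}\omega\to\omega^\ast\in\Omega$. Continuity of $\Pi_\beta$ gives $\Pi_\beta(\omega^\ast)=1$, and applying it to $\sigma^m\omega^\ast=\lim_k\sigma^{n_k+m}\omega$ and $\sigma^m\bar\omega^\ast=\lim_k\sigma^{n_k+m}\bar\omega$ (together with $\omega\in\UU_\beta$) yields
\[
\Pi_\beta(\sigma^m\omega^\ast)\le 1, \qquad \Pi_\beta(\sigma^m\bar\omega^\ast)\le 1 \qquad \text{for every } m\ge 0.
\]
Moreover $\omega^\ast$ cannot end in $0^\infty$ --- otherwise $\bar\omega^\ast$ would end in $\alpha^\infty$ and some shift would project to $\alpha/(\beta-1)>1$, contradicting $\beta<\alpha+1$ --- so $\omega^\ast$ is a nonterminating expansion of $1$. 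Together with the first inequality this forces $\omega^\ast=\alpha(\beta)$, the quasi-greedy $\beta$-expansion of $1$. The second inequality then reads $\Pi_\beta(\sigma^m\overline{\alpha(\beta)})\le 1$ for all $m\ge 0$, which is exactly the Komornik--Loreti lexicographic criterion (\cite{KomLor1}; see \cite{KLD} or \cite{KongLi} for the general alphabet) for $\beta\in\overline{\mathbf{V}}$. The first two assertions of the proposition then follow directly. For the last, $\beta_c=\min\mathbf{V}$ and $\mathbf{V}\cap(1,\beta_c)=\emptyset$ preclude accumulation from the left, so $\overline{\mathbf{V}}\cap(1,\beta_c)=\emptyset$ and $\WW_\beta=\emptyset$ whenever $\beta<\beta_c$.

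The principal obstacle is the characterization step --- identifying $\omega^\ast$ with $\alpha(\beta)$ and translating the $\Pi_\beta$-inequality on $\bar\omega^\ast$ into the lexicographic Komornik--Loreti criterion for $\overline{\mathbf{V}}$. One must check that the general alphabet $\{0,1,\dots,\alpha\}$ introduces no subtleties beyond the classical binary case, and must verify from the literature that the criterion as stated in non-strict form indeed cuts out precisely the closure $\overline{\mathbf{V}}$ rather than $\mathbf{V}$ itself.
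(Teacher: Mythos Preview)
Your approach differs from the paper's. The paper argues directly that $\WW_\beta=\emptyset$ on each of the ``stability intervals'' of \cite{deVries}, using that $\UU_\beta$ is locally constant on such intervals together with Lemma~\ref{lem:containment}; these intervals cover a dense open set of full measure in $(\beta_c,\alpha+1)$, and the range $\beta<\beta_c$ is handled separately via the fact that $\UU_\beta$ then consists only of eventually periodic sequences. Your route, by contrast, is essentially a proof of the implication (i)$\Rightarrow$(v) of Theorem~\ref{thm:when-non-empty}: you extract a subsequential limit $\omega^\ast$ of the orbit of $\omega$, identify it with $d=d(\beta)$, and then try to read off $\beta\in\overline{\A(\alpha)}$.

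The compactness step and the identification $\omega^\ast=d$ are fine. The gap is exactly where you suspected: the non-strict condition $\Pi_\beta(\sigma^m\bar d)\le 1$ for all $m$ does \emph{not} characterize $\overline{\A(\alpha)}$. For $\alpha=1$ and $\beta$ the golden ratio one has $d=(10)^\infty$, $\bar d=(01)^\infty$, and $\sigma(\bar d)=d$, so $\Pi_\beta(\sigma(\bar d))=1$; yet the golden ratio lies strictly below $\beta_c$ and hence outside $\overline{\A(1)}$. The correct criterion (condition (iii) of Theorem~\ref{thm:when-non-empty}, equivalent to (v) by \cite{KomLor3}) is the \emph{strict} inequality $\sigma^n(\bar d)<d$ for all $n\ge 1$; your limit argument yields only the weak inequality, and the equality case must be ruled out separately. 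Doing so requires precisely the combinatorial step the paper gives for (i)$\Rightarrow$(iii): if $\sigma^{n_0}(\bar d)\ge d$ for some $n_0$, then for every $\omega\in\UU_\beta$ one has $\omega_{n+1}\cdots\omega_{n+n_0}<d_1\cdots d_{n_0}$ for all $n$, whence $\limsup_n\sigma^n(\omega)<d$ and $\omega\in\widetilde\UU_\beta$. With that addition your argument closes; without it, your deduction that $\WW_\beta=\emptyset$ for $\beta<\beta_c$ also fails, since the weak inequality does not exclude such $\beta$.
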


In order to state the main theorems of this paper, some more notation is needed. First, we introduce the lexicographic ordering on $\Omega$: For $\omega=\omega_1\omega_2\cdots$ and $\eta=\eta_1\eta_2\cdots$ in $\Omega$, we write $\omega<\eta$ if there is an index $n\in\NN$ such that $\omega_i=\eta_i$ for $i=1,\dots,n-1$, and $\omega_n<\eta_n$. Likewise, for finite words $\omega=\omega_1\cdots\omega_m$ and $\eta=\eta_1\cdots\eta_m$ of the same length, we write $\omega<\eta$ if there is an index $n\leq m$ such that $\omega_i=\eta_i$ for $i=1,\dots,n-1$, and $\omega_n<\eta_n$. For finite or infinite sequences $\omega$ and $\eta$, we write $\omega\leq\eta$ if $\omega<\eta$ or $\omega=\eta$.

Call an expansion $\omega$ of a number $x$ {\em infinite} if $\omega_i>0$ for infinitely many $i$. The lexicographically largest infinite expansion of $x$ is called the {\em quasi-greedy expansion} of $x$. Quasi-greedy expansions were introduced formally in \cite{DK2} (where they are called quasi-regular expansions), but are already implicit in the work of Parry \cite{Parry}. A critical role in the study of $\beta$-expansions is played by the quasi-greedy expansion of $x=1$, which we shall denote by $d=d_1 d_2\cdots$. We shall often write $d=d(\beta)$ to express that $d$ is the quasi-greedy expansion of $1$ in base $\beta$. It is well known that $d$ satisfies the inequalities
\begin{equation*}
\sigma^n(d)\leq d, \qquad n\in\NN,
\end{equation*}
and unless $d$ is periodic, the inequality is strict for each $n$.
There is also a simple lexicographic characterization of $\UU_\beta$, namely
\begin{equation}
\omega\in\UU_\beta\quad\Longleftrightarrow \quad \sigma^n(\omega)<d\ \mbox{and}\ \sigma^n(\bar{\omega})<d\ \mbox{for all $n\geq 0$}.
\label{eq:U-beta-characterization}
\end{equation}
This is essentially due to Parry \cite{Parry}; see also \cite{EHJ,EJK,GlenSid,KongLi}. The corresponding characterization of the larger set $\Pi_\beta^{-1}(\A_\beta')$ is slightly more subtle; see \cite{KongLi,deVries} and \eqref{eq:bigger-U-characterization} below.

Some of our results below have an interpretation in terms of the set
\begin{equation}
\A(\alpha):=\{\beta\in(1,\alpha+1): 1\ \mbox{has a unique expansion in base}\ \beta\}.
\end{equation}
This set is akin to the set $\UU$ studied in \cite{KomLor3}, but we emphasize that $\A(\alpha)$ depends on the size of the alphabet $A$, which is fixed, whereas the set $\UU$ in \cite{KomLor3} consists of those bases $\beta$ for which $1$ has a unique expansion in base $\beta$ using only digits strictly smaller than $\beta$. Nonetheless, the sets $\UU$ and $\A(\alpha)$ have very similar properties. As usual, we denote by $\overline{\A(\alpha)}$ the topological closure of $\A(\alpha)$.

The following theorem is the main result of this paper. The equivalences (iii) $\Leftrightarrow$ (iv) and (iii) $\Leftrightarrow$ (v) were proved in \cite{deVries} and \cite{KomLor3}, respectively, but are included here to put our result in historical perspective.

\begin{theorem} \label{thm:when-non-empty}
For $1<\beta<1+\alpha$, the following statements are equivalent:
\begin{enumerate}[(i)]
\item $\WW_\beta\neq\emptyset$;
\item $\WW_\beta$ is uncountable;
\item $\sigma^n(\bar{d})<d$ for every $n\in\NN$, where $d=d(\beta)$;
\item The number $1$ has a unique infinite expansion in base $\beta$.
\item $\beta\in\overline{\A(\alpha)}$.
\end{enumerate}
\end{theorem}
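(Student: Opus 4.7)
Since the equivalences (iii)$\Leftrightarrow$(iv)$\Leftrightarrow$(v) are attributed to the literature, the task reduces to proving (i)$\Leftrightarrow$(ii)$\Leftrightarrow$(iii). Only (i)$\Leftrightarrow$(iii) is nontrivial, as (ii)$\Rightarrow$(i) is immediate.

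For (i)$\Rightarrow$(iii) I would argue contrapositively. Assume $\sigma^N(\bar d) \geq d$ for some $N \geq 1$, and let $\omega \in \UU_\beta$; the goal is to show $\omega \in \widetilde{\UU}_\beta$, so that $\WW_\beta = \emptyset$. By the reflection symmetry of $\UU_\beta$, it suffices to show $\limsup_n \Pi_\beta(\sigma^n(\bar\omega)) < 1$. Suppose not; passing to a convergent subsequence $\sigma^{n_k}(\bar\omega) \to \rho$ in the product topology on $\Omega$, one finds $\Pi_\beta(\rho) = 1$ and $\rho \leq d$. A terminating $\rho$ would force arbitrarily long runs of the digit $\alpha$ in $\omega$, incompatible with \eqref{eq:U-beta-characterization} since $\beta < \alpha+1$; so $\rho$ is infinite and hence $\rho = d$ (the largest infinite expansion of $1$). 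Thus $\omega$ contains prefixes $\bar d_1 \cdots \bar d_L$ starting at positions $n_k+1$ with $L \to \infty$. Shifting by $N$, the tail $\sigma^{n_k + N}(\omega)$ agrees with $\sigma^N(\bar d)$ in its first $L - N$ digits; if $\sigma^N(\bar d) > d$, taking $L$ past the first disagreement produces $\sigma^{n_k+N}(\omega) > d$, violating \eqref{eq:U-beta-characterization}. The boundary case $\sigma^N(\bar d) = d$ forces $d$ to be purely periodic of period $2N$ with block structure $(w\bar w)^\infty$, $w = d_1 \cdots d_N$, and requires a separate rigidity argument (exploiting this periodicity by hand) to rule out arbitrarily long $\bar d$-occurrences in $\UU_\beta$.

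For (iii)$\Rightarrow$(ii) I plan a direct construction. The preparatory step is to show that under (iii) there are infinitely many \emph{safe} positions $M$ with $\bar d_M < d_1$. If only finitely many existed, the bound $\bar d_m \leq d_1$ coming from (iii) at shift $m-1$, together with the assumption $\bar d_m \geq d_1$ for large $m$, forces $d_m = \bar d_1$ for all large $m$; so $d = d_1 \cdots d_L \bar d_1^\infty$ for some $L$, whence $\sigma^L(\bar d) = d_1^\infty \geq d$, contradicting (iii). Pick a rapidly increasing sequence of safe positions $M_1 < M_1' < M_2 < M_2' < \cdots$, and for each $\epsilon = (\epsilon_k) \in \{0,1\}^\NN$ set $N_k = M_k$ if $\epsilon_k = 0$ and $N_k = M_k'$ if $\epsilon_k = 1$; then define
\[
\omega^\epsilon \;=\; \bar d_1 \cdots \bar d_{N_1} \,0\, \bar d_1 \cdots \bar d_{N_2} \,0\, \bar d_1 \cdots \bar d_{N_3} \,0\, \cdots .
\]
Verification that $\omega^\epsilon \in \UU_\beta$ uses (iii) inside each $\bar d$-block (tails of $\bar d$ are lex $< d$), the safeness of block endpoints together with $0 < d_1$ for shifts at block boundaries and separators, and an analogous reflected argument for $\bar\omega^\epsilon$. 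Different $\epsilon$ yield different $\omega^\epsilon$ because, when the $N_k$ grow sufficiently fast, the positions of the $0$-separators (and hence the block lengths) can be recovered unambiguously from $\omega^\epsilon$. Finally, the arbitrarily long $\bar d$-prefixes in $\omega^\epsilon$ reflect to arbitrarily long $d$-prefixes in $\bar\omega^\epsilon$, forcing $\limsup_n \Pi_\beta(\sigma^n(\bar\omega^\epsilon)) = 1$ and so $\omega^\epsilon \in \WW_\beta$. Since $\{0,1\}^\NN$ is uncountable, $\WW_\beta$ is too.

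The hardest step, I expect, is the equality sub-case $\sigma^N(\bar d) = d$ in the (i)$\Rightarrow$(iii) direction, where the direct lex-disagreement argument dissolves into a prefix match and one must exploit the resulting rigid periodicity of $d$ by hand to rule out arbitrarily long $\bar d$-occurrences in $\UU_\beta$. A secondary subtlety in the (iii)$\Rightarrow$(ii) construction is that shifts landing deep inside a $\bar d$-block may have their agreement with $d$ extend past the block end, potentially requiring a refinement of the safe condition or longer $0$-buffers between blocks to carry the lex inequality across boundaries.
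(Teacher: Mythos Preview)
Your (i)$\Rightarrow$(iii) argument is on the right track but splits unnecessarily into cases. The paper handles $\sigma^{n_0}(\bar d)\geq d$ uniformly: for any $\omega\in\UU_\beta$ and any $n\geq 0$, the chain $\sigma^{n_0}(\bar d)\geq d>\sigma^{n+n_0}(\bar\omega)$ (the strict inequality coming from membership in $\UU_\beta$) gives, after reflection, $\sigma^{n_0}(d)<\sigma^{n+n_0}(\omega)$ strictly; combined with $\sigma^n(\omega)<d$ this forces the prefix inequality $\omega_{n+1}\cdots\omega_{n+n_0}<d_1\cdots d_{n_0}$, whence $\limsup_n\sigma^n(\omega)<d$. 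No equality sub-case, no compactness.

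Your (iii)$\Rightarrow$(ii) construction has a genuine flaw. Reflecting $\omega^\epsilon$ gives
\[
\bar\omega^\epsilon=d_1\cdots d_{N_1}\,\alpha\,d_1\cdots d_{N_2}\,\alpha\cdots,
\]
and already at $n=0$ the comparison with $d$ runs: the first $N_1$ digits agree, and the next is $\alpha$ versus $d_{N_1+1}$. Unless $d_{N_1+1}=\alpha$ you obtain $\bar\omega^\epsilon>d$, so $\omega^\epsilon\notin\UU_\beta$. Your safe-position condition $\bar d_M<d_1$ constrains $d_M$, not $d_{M+1}$, and does nothing about this; longer $0$-buffers only produce longer runs of $\alpha$ in $\bar\omega^\epsilon$. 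The paper's construction goes the other way: it sets $\omega(\MM)=\prod_k d_1\cdots d_{m_k}^-$, concatenated $d$-prefixes with the \emph{last digit decremented}, so that $\sigma^n(\omega)<d$ is automatic and $\limsup_n\sigma^n(\omega)=d$ when $m_k\to\infty$. All the work then goes into choosing the indices $m_k$ so that the reflected inequality $\sigma^n(\bar\omega)<d$ also holds; this is carried out in two separate technical theorems (one for $\limsup_n\sigma^n(\bar d)<d$, one for $\limsup_n\sigma^n(\bar d)=d$) and does not reduce to a digit condition at a single position.
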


It has been shown, in \cite{DK2} for the case $\alpha=1$ and in \cite{KKL} for the general case, that $\A(\alpha)$ has Hausdorff dimension 1. Thus, we immediately get the following consequence of Theorem \ref{thm:when-non-empty}, which the reader may contrast with Proposition \ref{prop:smallness}.

\begin{corollary} \label{cor:full-HD}
The set $\{\beta\in(1,\alpha+1): \WW_\beta\neq\emptyset\}$ has Hausdorff dimension 1.
\end{corollary}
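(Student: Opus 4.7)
The plan is to extract the identification of $\{\beta\in(1,\alpha+1):\WW_\beta\neq\emptyset\}$ with $\overline{\A(\alpha)}$ from Theorem \ref{thm:when-non-empty}, and then apply the quoted Hausdorff-dimension result for $\A(\alpha)$ together with monotonicity and ambient-dimension bounds. No further structural analysis of $\WW_\beta$ is needed at this stage.

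Concretely, I would first invoke the equivalence (i) $\Leftrightarrow$ (v) of Theorem \ref{thm:when-non-empty} to write
\begin{equation*}
\{\beta\in(1,\alpha+1): \WW_\beta\neq\emptyset\}=\overline{\A(\alpha)}\cap(1,\alpha+1).
\end{equation*}
Then I would recall from \cite{DK2,KKL} that $\dim_H \A(\alpha)=1$. Since $\A(\alpha)\subset\overline{\A(\alpha)}$, monotonicity of Hausdorff dimension gives $\dim_H \overline{\A(\alpha)}\geq 1$. On the other hand, $\overline{\A(\alpha)}\subseteq[1,\alpha+1]\subset\RR$, so trivially $\dim_H \overline{\A(\alpha)}\leq 1$. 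Combining these two bounds yields the claim.

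The proof really contains no obstacle: the substantive content sits entirely in Theorem \ref{thm:when-non-empty} (which identifies the base set with $\overline{\A(\alpha)}$) and in the previously established fact that $\dim_H \A(\alpha)=1$. The only minor point worth noting in the write-up is that one should verify that removing the (possibly) boundary points $1$ and $\alpha+1$ from $\overline{\A(\alpha)}$ does not affect the dimension, which is immediate since the Hausdorff dimension is stable under removal of finite (indeed countable) sets. The emphasis of the corollary is the contrast with Proposition \ref{prop:smallness}: although the set of bases with $\WW_\beta\neq\emptyset$ is Lebesgue-null and nowhere dense, it is nonetheless dimensionally as large as possible.
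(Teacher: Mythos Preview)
Your proposal is correct and matches the paper's own argument: the corollary is deduced directly from Theorem~\ref{thm:when-non-empty} (identifying the set with $\overline{\A(\alpha)}$) together with the cited result that $\dim_H \A(\alpha)=1$. Your additional remarks about monotonicity, the ambient upper bound, and stability under removing boundary points are valid elaborations of what the paper leaves implicit.
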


In \cite{Allaart}, where $\alpha=1$ and hence $1<\beta<2$, it is shown that Okamoto's function $F_a$ with $a=1/\beta$ has an infinite derivative at a point $x$ if and only if 
\begin{equation}
\lim_{n\to\infty}(3/\beta)^n\big(1-\Pi_\beta(\sigma^n(\omega))\big)=\lim_{n\to\infty}(3/\beta)^n\big(1-\Pi_\beta(\sigma^n(\bar{\omega}))\big)=\infty,
\label{eq:both-going-to-infinity}
\end{equation}
where $\omega$ is derived from the ternary expansion of $x$. In the follow-up paper \cite{Allaart2} a sequence of functions is identified whose infinite derivatives correspond similarly to $\beta$-expansions over the more general alphabet considered here. 
The limits in \eqref{eq:both-going-to-infinity} hold automatically when $\omega\in\widetilde{\UU}_\beta$, clearly fail when $\omega\not\in\UU_\beta$, but may or may not hold when $\omega\in\WW_\beta=\UU_\beta\backslash\widetilde{\UU}_\beta$. This suggests that it is meaningful to study the rate of convergence to $1$ for subsequences of $\Pi_\beta(\sigma^n(\omega))$. As the next theorem indicates, any desired rate of convergence can be achieved by a suitable choice of $\omega$, provided of course that $\WW_\beta$ is nonempty.

\begin{theorem} \label{thm:rate-of-convergence}
\begin{enumerate}[(i)]
\item Assume $\WW_\beta\neq\emptyset$. Then for any sequence $(\theta_n)_{n\in\NN}$ of positive real numbers there exist uncountably many $\omega\in\WW_\beta$ such that
\begin{equation*}
\liminf_{n\to\infty} \theta_n \big(1-\Pi_\beta(\sigma^n(\omega))\big)<\infty.
\end{equation*}
\item For each $\beta\geq\beta_c$ and for any increasing sequence $(\theta_n)_{n\in\NN}$ of positive real numbers with $\lim_{n\to\infty}\theta_n=\infty$, there exist uncountably many $\omega\in\UU_\beta$ such that
\begin{equation}
\lim_{n\to\infty} \theta_n \big(1-\Pi_\beta(\sigma^n(\omega))\big)=\lim_{n\to\infty} \theta_n \big(1-\Pi_\beta(\sigma^n(\bar{\omega}))\big)=\infty.
\label{eq:both-infinite}
\end{equation}
\end{enumerate}
\end{theorem}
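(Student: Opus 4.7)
The plan is to prove both parts by explicit constructions that exploit the lexicographic characterization \eqref{eq:U-beta-characterization} and condition (iii) of Theorem~\ref{thm:when-non-empty}.

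For part~(i), given $(\theta_n)$, I fix a rapidly increasing sequence $N_1 < N_2 < \cdots$ and lengths $m_k$ satisfying $\beta^{m_k} \ge \theta_{N_k}$, and build $\omega$ of the block form $F_0\, B_1\, F_1\, B_2\, F_2\, \cdots$, where $B_k = d_1 d_2 \cdots d_{m_k}$ begins at position $N_k+1$, with $d = d(\beta)$, and each filler $F_k$ is a finite word built from $\bar d$. Because $\Pi_\beta(d) = 1$, agreement in the first $m_k$ digits of $\sigma^{N_k}(\omega)$ forces $1 - \Pi_\beta(\sigma^{N_k}(\omega)) \le \alpha\beta^{-m_k}/(\beta-1)$, so $\theta_{N_k}(1 - \Pi_\beta(\sigma^{N_k}(\omega)))$ is bounded, giving the liminf claim; moreover $\Pi_\beta(\sigma^{N_k}(\omega)) \to 1$ forces $\omega \in \WW_\beta$ automatically. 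Uncountability is obtained by inserting a binary choice at each filler (e.g., two admissible lengths for $F_k$), producing a full binary tree of admissible sequences. Verifying $\omega \in \UU_\beta$ amounts to checking $\sigma^n(\omega) < d$ and $\sigma^n(\bar\omega) < d$ for all $n \ge 0$: condition (iii), $\sigma^n(\bar d) < d$ for every $n$, is precisely what makes $\bar d$-based fillers lexicographically safe, and together with $\sigma^j(d) \le d$ a boundary-by-boundary analysis gives the needed inequalities once the $m_k$ and $|F_k|$ are taken sufficiently large.

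For part~(ii), the strategy is to show that $\widetilde{\UU}_\beta$ is uncountable for every $\beta \ge \beta_c$; if $\omega \in \widetilde{\UU}_\beta$, then by definition there is $c > 0$ with $1 - \Pi_\beta(\sigma^n(\omega)) \ge c$ and $1 - \Pi_\beta(\sigma^n(\bar\omega)) \ge c$ for all large $n$, whence $\theta_n(1 - \Pi_\beta(\sigma^n(\omega))) \ge c\,\theta_n \to \infty$ and likewise for $\bar\omega$, yielding \eqref{eq:both-infinite}. For $\beta > \beta_c$, $\UU_\beta$ has positive Hausdorff dimension, while the bad set $\WW_\beta$, characterized by recurrence of arbitrarily long $d$-prefixes in the shifts of $\omega$ or $\bar\omega$, has strictly smaller Hausdorff dimension by a standard limsup-covering argument; hence $\widetilde{\UU}_\beta = \UU_\beta \setminus \WW_\beta$ has positive dimension and is uncountable. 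At the critical value $\beta = \beta_c$, where $\UU_\beta$ is only zero-dimensional, the dimension argument breaks down, and one instead uses the substitutive structure of the Komornik--Loreti sequence $d(\beta_c)$ to exhibit a Cantor-like uncountable subset of $\widetilde{\UU}_\beta$, for instance by freely concatenating two admissible finite words whose arbitrary concatenations stay uniformly below $d$ in every shift.

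The principal obstacle is in part~(i): verifying that the constructed $\omega$ lies in $\UU_\beta$, in particular the reflected condition $\sigma^n(\bar\omega) < d$. Each long $d$-prefix block $B_k$ in $\omega$ becomes a long $\bar d$-prefix block in $\bar\omega$, and checking this inequality at the boundaries with the reflected fillers requires a delicate case analysis driven entirely by condition (iii), with careful control over which digit within each block first witnesses a strict decrease relative to $d$. A secondary obstacle appears in part~(ii) at $\beta = \beta_c$: since $\UU_\beta$ is zero-dimensional there, uncountability of $\widetilde{\UU}_\beta$ must be extracted from the substitutive combinatorics of $d(\beta_c)$ rather than from Hausdorff-dimension considerations.
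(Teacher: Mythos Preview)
Your outline for part~(i) is close in spirit to the paper's, which also constructs $\omega$ as a concatenation of long $d$-prefixes; the paper's blocks are $d_1\cdots d_{m_k}^-$ (last digit lowered, no separate fillers), and the verification that $\sigma^n(\bar\omega)<d$ is exactly the content of Theorems~\ref{thm:key-construction} and~\ref{thm:critical-case}, which is then simply quoted. Your variant with $\bar d$-based fillers would require reproving something of comparable difficulty; you correctly flag this as the principal obstacle, but you have not actually carried it out.

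Part~(ii), however, contains a genuine gap. For $\beta>\beta_c$ you assert that $\dim_H\WW_\beta<\dim_H\UU_\beta$ ``by a standard limsup-covering argument''; this is precisely the open Question~1.9 in the paper, and no such argument is known. (The paper instead uses Lemma~\ref{lem:containment}: for any $\beta'\in(\beta_c,\beta)$ one has $\widetilde{\UU}_\beta\supset\UU_{\beta'}$, and $\UU_{\beta'}$ is uncountable.) More seriously, at $\beta=\beta_c$ your strategy of showing that $\widetilde{\UU}_{\beta_c}$ is uncountable fails outright: by the first half of Lemma~\ref{lem:few-choices}, any $\omega\in\UU_{\beta_c}$ in which both $b_M$ and $\overline{b_M}$ are absent is forced into an eventually periodic tail $(b_{m^*}\overline{b_{m^*}})^\infty$ for some $m^*<M$, so $\widetilde{\UU}_{\beta_c}$ is countable. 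In particular, one cannot ``freely concatenate two admissible finite words whose arbitrary concatenations stay uniformly below $d$'' at $\beta_c$. The paper circumvents this by abandoning the idea that the witnessing $\omega$ should lie in $\widetilde{\UU}_\beta$: it builds $\omega=(b_1\overline{b_1})^{k_1}(b_2\overline{b_2})^{k_2}\cdots$ with the $k_i$ chosen \emph{depending on} $(\theta_n)$ so that $\theta_{k_i}\geq\beta^{2^{i+1}}$; such $\omega$ typically lie in $\WW_{\beta_c}$, yet \eqref{eq:both-infinite} still holds because the prefix-agreement with $d$ at stage $j$ is only $2^j$ while $\theta_n\geq\beta^{2^{j+1}}$.
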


The set $\DD_\beta$, being a subset of $\A_\beta$, is Lebesgue null, and therefore it is interesting to ask about its Hausdorff dimension (or equivalently, that of $\WW_\beta$). In order to formulate precise results, we first introduce a metric $\rho_\beta$ on $\Omega$, defined by 
\begin{equation}
\rho_\beta(\omega,\eta):=\beta^{1-\inf\{n:\omega_n\neq \eta_n\}}.
\label{eq:metric}
\end{equation}
(The reason for the extra `$1$' is that this way, $\diam(\Omega)=1$.) Equipped with this metric and the lexicographic order ``$<$", $\Omega$ is a linearly ordered metric space, and so statements such as $\limsup_{n\to\infty} \sigma^n(\omega)=d$ are meaningful. Note by \eqref{eq:U-beta-characterization} that if $\omega\in\UU_\beta$, then $\limsup \sigma^n(\omega)=d$ if and only if for each $k\in\NN$, there is an index $n$ such that $\omega_{n+1}\cdots\omega_{n+k}=d_1\cdots d_k$. 

For a Borel set $E\subset \Omega$, we denote by $\dim_H E$ the Hausdorff dimension of $E$ induced by the metric $\rho_\beta$, where the value of $\beta$ will usually be clear form the context. Note that the value of $\dim_H E$ depends on the choice of $\beta$, but whether $\dim_H E$ is positive or zero is independent of $\beta$.

\begin{theorem} \label{thm:Hausdorff-dimension}
\begin{enumerate}[(i)]
\item $\dim_H \UU_\beta=\dim_H \widetilde{\UU}_\beta$ for every $\beta$.
\item Let $d=d(\beta)$. If $\limsup_{n\to\infty} \sigma^n(\bar{d})<d$, then $\dim_H \WW_\beta>0$.
\item There are infinitely many values of $\beta$ in $(1,\alpha+1)$ such that $\WW_\beta$ is uncountable but $\dim_H \WW_\beta=0$. The smallest such $\beta$ is $\beta_c$.
\end{enumerate}
\end{theorem}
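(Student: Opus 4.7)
The plan is to handle the three parts separately.

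For part (i), the inclusion $\widetilde{\UU}_\beta \subseteq \UU_\beta$ gives $\dim_H \widetilde{\UU}_\beta \leq \dim_H \UU_\beta$. For the reverse, one checks that $\omega \in \widetilde{\UU}_\beta$ if and only if there is some $k$ such that the prefix $d_1\cdots d_k$ occurs only finitely often as a factor of each of $\omega$ and $\bar\omega$; this follows by unpacking the order-theoretic meaning of $\limsup$ on $\Omega$ (for $\omega \in \UU_\beta$, any shift $\sigma^n(\omega)$ whose initial $k$ symbols differ from $d|_k$ must satisfy $\sigma^n(\omega)|_k < d|_k$, bounding $\sigma^n(\omega)$ away from $d$). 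Consequently the set $T_k := \{\omega \in \UU_\beta : d_1\cdots d_k \text{ is a factor of neither } \omega \text{ nor } \bar\omega\}$ lies in $\widetilde{\UU}_\beta$ for every $k$. I would then show $\dim_H T_k \to \dim_H \UU_\beta$ as $k \to \infty$ by a Moran-type construction: for $\ell \gg k$, concatenations of length-$\ell$ admissible $\UU_\beta$-words avoiding the forbidden factor $d_1\cdots d_k$ (with a short fixed buffer, if needed, ensuring admissibility of the concatenation) form a self-similar subset of $T_k$, and a standard entropy argument shows the exponential growth rate of such words approaches that of all length-$\ell$ admissible $\UU_\beta$-words.

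For part (ii), the hypothesis provides $K$ and $N$ with $\sigma^n(\bar d)|_K < d|_K$ for every $n \geq N$. I construct uncountably many $\omega \in \WW_\beta$ of the form
\[
\omega = u^{(\epsilon_1)}\,\bar d|_{n_1}\,u^{(\epsilon_2)}\,\bar d|_{n_2}\cdots, \qquad (\epsilon_i)_{i \in \NN} \in \{0,1\}^\NN,
\]
where $(n_k)$ is a slowly increasing sequence and $u^{(0)}, u^{(1)}$ are two fixed ``separator'' words of a common length $L$, chosen so that every shift of $\omega$ and of $\bar\omega$ remains lexicographically below $d$. The verification splits into shifts landing inside a separator (handled directly by the choice of $u^{(i)}$, e.g.\ a short prefix of $\bar d$ and a slight modification thereof) and shifts landing inside a $\bar d$-block, which take the form $\sigma^i(\bar d)|_{n_k - i}\cdot(\text{tail})$ and are bounded below $d$ once $n_k - i > K$, by the hypothesis. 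Since $\bar\omega$ contains $d|_{n_k}$ for every $k$, $\limsup_m \sigma^m(\bar\omega) = d$, so $\omega \in \WW_\beta$; the coding $(\epsilon_i) \mapsto \omega$ is bi-Lipschitz onto its image with respect to $\rho_\beta$, yielding $\dim_H \WW_\beta > 0$.

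For part (iii), Proposition \ref{prop:smallness} forces $\beta \geq \beta_c$, so the smallest candidate is $\beta_c$ itself. The Komornik--Loreti constant satisfies $\beta_c \in \overline{\A(\alpha)}$ (it is defined precisely as the smallest base in which $1$ has a unique expansion), so Theorem \ref{thm:when-non-empty} gives $\WW_{\beta_c}$ uncountable; combined with $\dim_H \UU_{\beta_c} = 0$ (cited in the introduction) and $\WW_{\beta_c} \subseteq \UU_{\beta_c}$, this gives $\dim_H \WW_{\beta_c} = 0$. To exhibit infinitely many such bases, I would invoke the known staircase structure of $\beta \mapsto \dim_H \UU_\beta$: iterating the Thue--Morse-type substitution that generates $d(\beta_c)$ produces a sequence of ``higher Komornik--Loreti'' bases $\beta_c = \beta^{(1)} < \beta^{(2)} < \cdots$ in $\overline{\A(\alpha)}$ for which $\UU_{\beta^{(k)}}$ remains uncountable but zero-dimensional, and then Theorem \ref{thm:when-non-empty} together with $\WW_{\beta^{(k)}} \subseteq \UU_{\beta^{(k)}}$ concludes. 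The principal obstacle lies in part (ii): identifying two separators $u^{(0)}, u^{(1)}$ that yield admissible concatenations under \emph{all} shifts, as shifts straddling the boundary between a separator and a $\bar d$-block require delicate use of the hypothesis on $\limsup \sigma^n(\bar d)$ together with the defining lexicographic inequalities for $\UU_\beta$.
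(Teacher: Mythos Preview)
Your proposal contains genuine gaps in parts (ii) and (iii).

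\medskip
\textbf{Part (ii).} Your construction cannot yield positive Hausdorff dimension. You require $n_k\to\infty$ (otherwise $\bar\omega$ never contains arbitrarily long prefixes of $d$, and $\omega\notin\WW_\beta$). But then the positions $s_i:=iL+\sum_{j\leq i}n_j$ at which the free choices $\epsilon_i$ are made satisfy $s_i/i\to\infty$ by Ces\`aro. Two codings first differing at index $i$ produce sequences $\omega,\omega'$ with $\rho_\beta(\omega,\omega')\asymp\beta^{-s_{i-1}}$, so the map $(\epsilon_i)\mapsto\omega$ is \emph{not} bi-Lipschitz for any metric on $\{0,1\}^\NN$ giving that space positive dimension. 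Concretely, the image is covered by $2^i$ cylinders of diameter $\asymp\beta^{-s_i}$, and since $s_i/i\to\infty$ one has $2^i\beta^{-s\cdot s_i}\to 0$ for every $s>0$; hence the image has Hausdorff dimension zero. The paper circumvents this by first proving (Theorem~\ref{thm:key-construction}) that under the hypothesis $\limsup\sigma^n(\bar d)<d$ one may form $\omega(\MM)$ from an \emph{arbitrary} sequence $\MM=(\mu_j)$ with each $\mu_j\in\{m_k:k\in\NN\}$, allowing repetition. One then takes $V=\{d_1\cdots d_{m_k}^-:k\in\NN\}$ and applies a general result (Theorem~\ref{thm:general-Hausdorff-dimension}) stating that the subset $E\subset V^\NN$ of concatenations using every word at least once satisfies $\dim_H E=\dim_H V^\NN>0$. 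The point is that most blocks can be short (keeping a positive density of free choices), with only occasional long blocks inserted to force $\limsup\sigma^n(\omega)=d$.

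\medskip
\textbf{Part (iii).} Your argument for infinitely many such $\beta$ is based on a false premise: you claim there exist bases $\beta^{(k)}>\beta_c$ for which $\UU_{\beta^{(k)}}$ is ``uncountable but zero-dimensional''. But, as recalled in the introduction, $\dim_H\UU_\beta>0$ for \emph{every} $\beta>\beta_c$; the only base with $\UU_\beta$ uncountable and zero-dimensional is $\beta_c$ itself. Thus for $\beta>\beta_c$ one cannot deduce $\dim_H\WW_\beta=0$ from the inclusion $\WW_\beta\subset\UU_\beta$. The paper instead proves directly (Theorem~\ref{thm:Hausdorff-dimension-p-mirror}) that $\dim_H\WW_\beta=0$ whenever $\beta$ is a de Vries--Komornik number, of which there are infinitely many. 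This uses the rigid combinatorial structure of $p$-mirror sequences (Lemma~\ref{lem:few-choices}): once a sequence $\omega\in\UU_\beta$ contains the block $b_m=d_1\cdots d_{2^{m-1}p}$, only two continuations of length $2^{m-1}p$ are possible, and a binary-tree counting argument (Lemma~\ref{lem:binary-tree}) shows the resulting covers have vanishing $s$-sums for every $s>0$.

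\medskip
\textbf{Part (i).} Your approach is not wrong, but it is considerably harder than the paper's. The paper simply combines Lemma~\ref{lem:containment} ($\beta'<\beta\Rightarrow\UU_{\beta'}\subset\widetilde\UU_\beta$) with the continuity of $\beta\mapsto\dim_H\UU_\beta$ from \cite{KKL}, letting $\beta'\uparrow\beta$. Your route via $\dim_H T_k\to\dim_H\UU_\beta$ would essentially require reproving that left-continuity from scratch.
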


The above theorem leaves several interesting questions.

\begin{question}
{\rm
Does the converse of (ii) hold? In other words, does 
\begin{equation}
\limsup_{n\to\infty} \sigma^n(\bar{d})=d
\label{eq:self-reflect}
\end{equation} 
imply that $\dim_H \WW_\beta=0$? (Note that if $\limsup_{n\to\infty} \sigma^n(\bar{d})>d$, then $\WW_\beta=\emptyset$ by Theorem \ref{thm:when-non-empty}.)
}
\end{question}

\begin{question}
{\rm
For which values of $\beta$ (other than those specified in Theorem \ref{thm:Hausdorff-dimension-p-mirror} below) does $d:=d(\beta)$ satisfy \eqref{eq:self-reflect}? 
}
\end{question}

\begin{question}
{\rm
One would expect that $\WW_\beta$ is always a small subset of $\UU_\beta$. Is it true that $\dim_H \WW_\beta<\dim_H \UU_\beta$ for all $\beta$?
}
\end{question}

\subsection{Extension to the interval $J_\beta'$}

We now show that all of the above results remain valid if the set $\A_\beta$ is replaced with the larger set $\A_\beta'$ defined in \eqref{eq:univoque-set}. By analogy with \eqref{eq:definition-of-U}, define the set
\begin{equation*}
\UU_\beta':=\Pi_\beta^{-1}(\A_\beta').
\end{equation*}
Instead of the simple characterization \eqref{eq:U-beta-characterization}, we have the following (see \cite[Theorem 3.4]{KongLi}):

\begin{lemma} \label{lem:bigger-U-characterization}
Let $\omega=(\omega_n)_n\in\Omega\backslash \{000\cdots,\alpha\alpha\alpha\cdots\}$, and let
\begin{equation*}
m^+(\omega):=\min\{n:\omega_n<\alpha\}, \qquad m^-(\omega):=\min\{n:\omega_n>0\}.
\end{equation*}
Then $\omega\in\UU_\beta'$ if and only if
\begin{equation}
\sigma^n(\omega)<d\ \mbox{for all $n\geq m^+(\omega)$} \quad\mbox{and} \quad \sigma^n(\bar{\omega})<d\ \mbox{for all $n\geq m^-(\omega)$},
\label{eq:bigger-U-characterization}
\end{equation}
where $d=d(\beta)$.
\end{lemma}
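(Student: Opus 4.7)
My plan is to derive the lemma by combining a value-theoretic ``switching'' criterion for non-uniqueness with the standard translation from value inequalities to lexicographic inequalities against the quasi-greedy expansion $d$.

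The first step is to show that $\omega \in \UU_\beta'$ if and only if for every $n \geq 1$,
\[
\bigl(\omega_n = \alpha\ \text{or}\ \Pi_\beta(\sigma^n(\omega)) < 1\bigr)\ \text{and}\ \bigl(\omega_n = 0\ \text{or}\ \Pi_\beta(\sigma^n(\bar\omega)) < 1\bigr).
\]
If $\omega' \neq \omega$ is any second expansion of $\Pi_\beta(\omega)$ and $n$ is the smallest index where they differ, writing $k := \omega_n' - \omega_n \neq 0$ the identity $\Pi_\beta(\sigma^n(\omega)) - \Pi_\beta(\sigma^n(\omega')) = k$ forces $|k| \geq 1$, yielding one of the two value conditions depending on the sign of $k$. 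Conversely, a failure at index $n$ produces an explicit alternative expansion by changing $\omega_n$ by $\pm 1$ and rewriting the tail.

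Next I would invoke the standard Parry-type equivalence: for any $\tau \in \Omega$,
\[
\Pi_\beta(\sigma^m(\tau)) < 1\ \text{for every } m \geq 0\ \iff\ \sigma^m(\tau) < d\ \text{for every } m \geq 0.
\]
Here the direction from lex inequality to value inequality is the delicate one, and it uses crucially that $d$ is the quasi-greedy (hence infinite) expansion of $1$: if $\tau < d$ with first differing position $k$, the identity $\Pi_\beta(d) - \Pi_\beta(\tau) = (d_k - \tau_k)/\beta^k - (\Pi_\beta(\sigma^k(\tau)) - \Pi_\beta(\sigma^k(d)))/\beta^k$, together with the strict positivity of $\Pi_\beta(\sigma^k(d))$ (since $d$ is infinite and no shift equals $0^\infty$), yields $\Pi_\beta(\tau) < 1$, and the same argument applies to every shift. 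The reverse direction follows by bounding $\Pi_\beta$ at the first position where the lex assumption would fail.

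Finally, I would apply this equivalence to $\sigma^{m^+(\omega)-1}(\omega)$ and $\sigma^{m^-(\omega)-1}(\bar\omega)$ to obtain the lex conditions $\sigma^n(\omega) < d$ for $n \geq m^+(\omega)$ and $\sigma^n(\bar\omega) < d$ for $n \geq m^-(\omega)$. The only subtlety is that the step-1 criterion checks the value condition only at positions $n$ where $\omega_n < \alpha$ (resp.\ $\omega_n > 0$); the ``gap'' positions (those with $\omega_n = \alpha$, resp.\ $\omega_n = 0$) are handled by the recursion $\Pi_\beta(\sigma^{n-1}(\omega)) = \omega_n/\beta + \Pi_\beta(\sigma^n(\omega))/\beta$ together with the hypothesis $\beta < \alpha + 1$, which propagates the bound $\Pi_\beta(\sigma^{n-1}(\omega)) < 1$ forward across consecutive occurrences of $\omega_n = \alpha$ (giving $\Pi_\beta(\sigma^n(\omega)) < \beta - \alpha < 1$) and symmetrically for the reflection. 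The main obstacle will be the Parry-style equivalence in the middle step, and in particular pinning down strict versus non-strict inequality at boundary configurations where a shift of $\tau$ relates to an alternative (possibly finite) expansion of~$1$; the infinitude of the quasi-greedy $d$ is what rescues strictness.
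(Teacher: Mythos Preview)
The paper does not supply its own proof of this lemma; it simply quotes it as \cite[Theorem~3.4]{KongLi} (with a precursor in \cite{EJK}), so there is nothing in the paper to compare your argument against directly. Your three-step plan---first the pointwise ``switching'' criterion for uniqueness, then the Parry-type translation between value inequalities and lexicographic inequalities against $d$, then the propagation across positions carrying the extreme digit---is exactly the standard route and is correct in outline.

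Two small corrections are worth flagging. First, there is an off-by-one slip: you should apply the Parry equivalence to $\tau=\sigma^{m^+(\omega)}(\omega)$ rather than $\sigma^{m^+(\omega)-1}(\omega)$. The lemma only asserts $\sigma^n(\omega)<d$ for $n\geq m^+(\omega)$, and in general $\Pi_\beta(\sigma^{m^+(\omega)-1}(\omega))<1$ need not hold when $\beta\leq\alpha$, so the hypothesis of your Step~2 equivalence can fail at $m=0$ with your choice of $\tau$. Second, your propagation bound $\Pi_\beta(\sigma^n(\omega))<\beta-\alpha<1$ tacitly assumes $\beta>\alpha$. When $\beta\leq\alpha$ the quantity $\beta-\alpha$ is nonpositive, but then the same recursion actually shows that no digit equal to $\alpha$ can occur after position $m^+(\omega)$: if $\omega_{n-1}<\alpha$ then Step~1 gives $\Pi_\beta(\sigma^{n-1}(\omega))<1$, whence $\omega_n<\beta\leq\alpha$. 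So the ``gap'' case is vacuous in that regime and your argument still goes through. With these adjustments the proof is complete.
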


(A precursor of this lemma was proved in \cite{EJK}.) One checks easily that \eqref{eq:bigger-U-characterization} is equivalent to the condition
\begin{equation*}
\Pi_\beta(\sigma^n(\omega))<1\ \mbox{for all $n\geq m^+(\omega)$} \quad\mbox{and} \quad \Pi_\beta(\sigma^n(\bar{\omega}))<1\ \mbox{for all $n\geq m^-(\omega)$}.
\end{equation*}
It is therefore natural to define a set $\widetilde{U}_\beta'$ by complete analogy with the definition of $\widetilde{U}_\beta$, i.e.
\begin{equation*}
\widetilde{\UU}_\beta':=\{\omega\in\UU_\beta': \limsup_{n\to\infty}\Pi_\beta(\sigma^n(\omega))<1\ \mbox{and}\ \limsup_{n\to\infty}\Pi_\beta(\sigma^n(\bar{\omega}))<1\}.
\end{equation*}
Let $\WW_\beta':=\UU_\beta'\backslash \widetilde{\UU}_\beta'$. It follows immediately that $\WW_\beta\subset\WW_\beta'$. Vice versa, if $\omega\in\WW_\beta'$, then putting $N:=\max\{m^+(\omega),m^-(\omega)\}$ we have by \eqref{eq:U-beta-characterization} and \eqref{eq:bigger-U-characterization} that $\sigma^N(\omega)\in \UU_\beta$. But since $\omega\not\in\widetilde{\UU}_\beta'$, it follows from the definitions of $\widetilde{\UU}_\beta$ and $\widetilde{\UU}_\beta'$ that $\sigma^N(\omega)\not\in\widetilde{\UU}_\beta$, and so $\sigma^N(\omega)\in\WW_\beta$. As a result,
\begin{equation*}
\WW_\beta'\subset \bigcup_{N=1}^\infty \bigcup_{\eta_1\cdots \eta_N\in A^N} \{\eta_1\cdots \eta_N \omega: \omega\in\WW_\beta\}.
\end{equation*}
Since the right hand side is a countable union of similar copies of $\WW_\beta$, and by Theorem \ref{thm:when-non-empty} $\WW_\beta$ is either empty or uncountable, it follows that $\WW_\beta$ and $\WW_\beta'$ have the same cardinality and the same Hausdorff dimension. Therefore, we may replace $\WW_\beta$ with $\WW_\beta'$ in each of the above results without affecting their validity.

\bigskip

The remainder of this article is organized as follows. Section \ref{sec:prelim} reviews necessary background from the literature on $\beta$-expansions, including the important notion of a $p$-mirror sequence, and includes a proof of Proposition \ref{prop:smallness}. Theorems \ref{thm:when-non-empty} and \ref{thm:rate-of-convergence} are proved in Section \ref{sec:main-proof}, and Theorem \ref{thm:Hausdorff-dimension} is proved in Section \ref{sec:HD}.

\section{Preliminaries} \label{sec:prelim}

The first two lemmas below were proved in \cite{JSS} for the case $\alpha=1$. We include short proofs here for completeness.

\begin{lemma} \label{lem:containment}
If $\beta_1>\beta_2$, then $\widetilde{\UU}_{\beta_1}\supset \UU_{\beta_2}$.
\end{lemma}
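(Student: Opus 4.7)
The plan is to exploit the fact that when $\beta_1 > \beta_2$, the projection $\Pi_{\beta_1}$ is uniformly smaller than $\Pi_{\beta_2}$ by a definite factor of $\beta_2/\beta_1$. Since $\UU_{\beta_2}$ is characterized by $\Pi_{\beta_2}(\sigma^n(\omega))<1$ and $\Pi_{\beta_2}(\sigma^n(\bar\omega))<1$ for all $n\geq 0$, this contraction factor should force the corresponding $\Pi_{\beta_1}$-values to stay uniformly bounded away from $1$, which is exactly what membership in $\widetilde{\UU}_{\beta_1}$ requires.

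First I would establish the key comparison inequality: for every $\eta\in\Omega$,
\begin{equation*}
\Pi_{\beta_1}(\eta) \leq \frac{\beta_2}{\beta_1}\,\Pi_{\beta_2}(\eta).
\end{equation*}
This follows from a term-by-term estimate: for each $j\geq 1$, $\eta_j/\beta_1^j = (\beta_2/\beta_1)^j\cdot \eta_j/\beta_2^j \leq (\beta_2/\beta_1)\cdot \eta_j/\beta_2^j$, because $(\beta_2/\beta_1)^j\leq \beta_2/\beta_1$ whenever $\beta_2/\beta_1<1$ and $j\geq 1$.

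Given $\omega\in\UU_{\beta_2}$, applying this inequality with $\eta=\sigma^n(\omega)$ and then with $\eta=\sigma^n(\bar\omega)$ yields
\begin{equation*}
\Pi_{\beta_1}(\sigma^n(\omega)) \leq \frac{\beta_2}{\beta_1} \quad\text{and}\quad \Pi_{\beta_1}(\sigma^n(\bar\omega)) \leq \frac{\beta_2}{\beta_1}
\end{equation*}
for every $n\geq 0$. Since $\beta_2/\beta_1<1$, both bounds are strictly less than $1$ uniformly in $n$, so $\omega\in\UU_{\beta_1}$ (by the characterization stated in the excerpt), and both $\limsup$s appearing in the definition of $\widetilde{\UU}_{\beta_1}$ are at most $\beta_2/\beta_1<1$. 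Hence $\omega\in\widetilde{\UU}_{\beta_1}$.

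I do not anticipate any genuine obstacle here: the lemma is essentially a quantitative refinement of the obvious monotonicity $\Pi_{\beta_1}(\eta)\leq \Pi_{\beta_2}(\eta)$, and the full factor $\beta_2/\beta_1<1$ appears for free from the term-by-term comparison.
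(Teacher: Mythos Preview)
Your argument is correct and in fact cleaner than the paper's. The paper establishes the same implication---that $\Pi_{\beta_2}(\omega)<1$ forces $\Pi_{\beta_1}(\omega)\leq 1-\delta$ for some $\delta>0$ independent of $\omega$---but does so by a case analysis: it fixes a cutoff $M$, treats separately the case where the first $M$ digits of $\omega$ all vanish (bounding $\Pi_{\beta_1}(\omega)$ by a tail sum) and the case where some early digit is positive (using $\beta_2^{-j}-\beta_1^{-j}>0$), and takes $\delta$ to be the minimum over these cases. Your single multiplicative inequality $\Pi_{\beta_1}(\eta)\leq(\beta_2/\beta_1)\Pi_{\beta_2}(\eta)$, obtained from $(\beta_2/\beta_1)^j\leq\beta_2/\beta_1$ for $j\geq 1$ and $\eta_j\geq 0$, bypasses the case split entirely and delivers the explicit gap $\delta=1-\beta_2/\beta_1$. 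Both routes work; yours is shorter and yields a sharper constant. (One cosmetic point: the displayed bounds $\Pi_{\beta_1}(\sigma^n(\omega))\leq\beta_2/\beta_1$ are actually strict, since $\Pi_{\beta_2}(\sigma^n(\omega))<1$; but the non-strict inequality already suffices for the $\limsup$ conclusion.)
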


\begin{proof}
It suffices to find $\delta>0$ such that for $\omega\in\Omega$ the implication
\begin{equation}
\Pi_{\beta_2}(\omega)<1 \qquad \Longrightarrow \qquad \Pi_{\beta_1}(\omega)\leq 1-\delta
\label{eq:order-implication}
\end{equation}
holds; applying this to $\sigma^n(\omega)$ and $\sigma^n(\bar{\omega})$ in place of $\omega$ then gives the lemma.

Choose $M\in\NN$ large enough so that
$\alpha\beta_1^{-M}<\beta_1-1$,
and let
\begin{equation*}
\delta:=\min\left\{1-\frac{\alpha\beta_1^{-M}}{\beta_1-1},\min_{1\leq j\leq M}(\beta_2^{-j}-\beta_1^{-j})\right\}.
\end{equation*}
If $\omega_j=0$ for $j=1,\dots,M$, then
\begin{equation*}
\Pi_{\beta_1}(\omega)=\sum_{j=M+1}^\infty \frac{\omega_j}{\beta_1^j}\leq \frac{\alpha\beta_1^{-M}}{\beta_1-1}\leq 1-\delta.
\end{equation*}
On the other hand, if $\omega_{j_0}>0$ for some $j_0\in\{1,\dots,M\}$, then
\begin{equation*}
\Pi_{\beta_2}(\omega)-\Pi_{\beta_1}(\omega) \geq \omega_{j_0}(\beta_2^{-j_0}-\beta_1^{-j_0})\geq \min_{1\leq j\leq M}(\beta_2^{-j}-\beta_1^{-j})\geq \delta.
\end{equation*}
In both cases, \eqref{eq:order-implication} follows.
\end{proof}

\begin{lemma} \label{lem:bi-Lipschitz}
The map $\Pi_\beta|_{\UU_\beta}$ is bi-Lipschitz from $(\UU_\beta,\rho_\beta)$ onto $\Pi_\beta(\UU_\beta)$.
\end{lemma}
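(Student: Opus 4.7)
The plan is to exhibit constants $c_1, c_2 > 0$ (depending only on $\alpha$ and $\beta$) such that
$$c_1\, \rho_\beta(\omega,\eta) \;\leq\; |\Pi_\beta(\omega) - \Pi_\beta(\eta)| \;\leq\; c_2\, \rho_\beta(\omega,\eta)$$
for every $\omega,\eta \in \UU_\beta$. The upper estimate is a soft bound that requires no hypothesis: setting $n := \inf\{j:\omega_j\neq\eta_j\}$, the triangle inequality together with a geometric series gives $|\Pi_\beta(\omega)-\Pi_\beta(\eta)| \leq \sum_{j=n}^\infty \alpha\beta^{-j} = [\alpha/(\beta-1)]\,\rho_\beta(\omega,\eta)$, so $c_2 = \alpha/(\beta-1)$ works.

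For the lower bound the univoque hypothesis enters through the unlabeled equivalence displayed just before \eqref{eq:U-beta-characterization}, namely that $\omega\in\UU_\beta$ iff $\Pi_\beta(\sigma^n(\omega))<1$ and $\Pi_\beta(\sigma^n(\bar\omega))<1$ for every $n\geq 0$. Since $\Pi_\beta(\bar\tau) = \alpha/(\beta-1)-\Pi_\beta(\tau)$ for every $\tau\in\Omega$, the condition on $\bar\omega$ rearranges to a matching lower bound, so together the two inequalities confine each tail:
$$\frac{\alpha}{\beta-1}-1 \;<\; \Pi_\beta(\sigma^n(\omega)) \;<\; 1 \qquad (\omega\in\UU_\beta,\ n\geq 0).$$
In other words, every tail of every univoque sequence is projected into a single fixed open subinterval of $[0,\alpha/(\beta-1)]$.

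Given $\omega,\eta\in\UU_\beta$ that first disagree at position $n$, with $\omega_n > \eta_n$ after relabeling if needed, I would factor
$$\Pi_\beta(\omega)-\Pi_\beta(\eta) \;=\; \beta^{-n}\bigl[(\omega_n-\eta_n) + \Pi_\beta(\sigma^n(\omega)) - \Pi_\beta(\sigma^n(\eta))\bigr]$$
and then plug in: using $\omega_n-\eta_n\geq 1$, the lower bound on $\Pi_\beta(\sigma^n(\omega))$, and the upper bound on $\Pi_\beta(\sigma^n(\eta))$, the bracket is bounded below by $1 + (\alpha/(\beta-1)-1) - 1 = (\alpha-\beta+1)/(\beta-1)$. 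This is strictly positive precisely because $\beta<\alpha+1$. Dividing through by $\rho_\beta(\omega,\eta)=\beta^{1-n}$ delivers the lower Lipschitz constant $c_1 = (\alpha-\beta+1)/[\beta(\beta-1)]$.

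I do not anticipate a real obstacle. The one conceptual point worth noting is that both halves of the characterization are indispensable for the lower bound: without the condition on $\bar\omega$ the tail $\Pi_\beta(\sigma^n(\omega))$ could collapse toward $0$ and cancel the leading-digit gap, which reflects exactly how non-univoque points admit distinct preimages with coinciding projections.
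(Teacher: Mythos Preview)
Your proof is correct and follows essentially the same route as the paper's: both reduce to bounding the bracket at the first index of disagreement, use the two halves of the univoque characterization to trap each tail projection in the interval $\bigl(\alpha/(\beta-1)-1,\,1\bigr)$, and arrive at the identical constants $c_1=(\alpha+1-\beta)/[\beta(\beta-1)]$ and $c_2=\alpha/(\beta-1)$. Your presentation is slightly more compact in that you state the two-sided tail bound once and apply it symmetrically, whereas the paper treats the two tails separately, but there is no substantive difference.
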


\begin{proof}
Let $\omega=\omega_1\omega_2\cdots$ and $\eta=\eta_1\eta_2\cdots$ be in $\UU_\beta$. If $\min\{n:\omega_n\neq\eta_n\}=k+1$, then $\rho_\beta(\omega,\eta)=\beta^{-k}$, and
\begin{equation*}
|\Pi_\beta(\omega)-\Pi_\beta(\eta)|=\beta^{-k}\big|\Pi_\beta(\sigma^k(\omega))-\Pi_\beta(\sigma^k(\eta))\big|.
\end{equation*}
Hence, it suffices to show that there are constants $0<C_1<C_2$ such that
\begin{equation}
\omega_1\neq\eta_1 \qquad\Longrightarrow \qquad C_1\leq|\Pi_\beta(\omega)-\Pi_\beta(\eta)|\leq C_2.
\label{eq:bi-Lipschitz}
\end{equation}
Assume without loss of generality that $\omega_1>\eta_1$. On the one hand,
\begin{equation*}
|\Pi_\beta(\omega)-\Pi_\beta(\eta)|\leq \sum_{j=1}^\infty\frac{|\omega_j-\eta_j|}{\beta^j}\leq \sum_{j=1}^\infty \frac{\alpha}{\beta^j}
=\frac{\alpha}{\beta-1}=:C_2.
\end{equation*}
On the other hand, $\omega\in\UU_\beta$ implies $\Pi_\beta(\sigma(\bar{\omega}))<1$, so $\sum_{j=2}^\infty \bar{\omega}_j\beta^{-j}<\beta^{-1}$, and hence,
\begin{equation*}
\sum_{j=2}^\infty \frac{\omega_j}{\beta^j}=\sum_{j=2}^\infty\frac{\alpha-\bar{\omega}_j}{\beta^j}
>\sum_{j=2}^\infty \frac{\alpha}{\beta^j}-\frac{1}{\beta}=\frac{1}{\beta}\left(\frac{\alpha+1-\beta}{\beta-1}\right),
\end{equation*}
while $\eta\in\UU_\beta$ implies $\Pi_\beta(\sigma(\eta))<1$, so $\sum_{j=2}^\infty \eta_j\beta^{-j}<\beta^{-1}$. Thus,
\begin{align*}
\Pi_\beta(\omega)-\Pi_\beta(\eta)&=\frac{\omega_1-\eta_1}{\beta}+\sum_{j=2}^\infty\frac{\omega_j-\eta_j}{\beta^j}\\
&>\frac{1}{\beta}+\frac{1}{\beta}\left(\frac{\alpha+1-\beta}{\beta-1}-1\right)=\frac{\alpha+1-\beta}{\beta(\beta-1)}=:C_1.
\end{align*}
Hence, we have \eqref{eq:bi-Lipschitz}.
\end{proof}

\begin{lemma}[\cite{deVries}, Proposition 2.4] \label{lem:increasing-projection}
The projection map $\Pi_\beta$ is strictly increasing on $\UU_\beta$.
\end{lemma}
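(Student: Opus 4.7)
The plan is to reduce to the case where the two sequences differ at the very first coordinate, and then invoke the signed lower bound that is implicit in the proof of Lemma \ref{lem:bi-Lipschitz}.

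First I would take distinct $\omega,\eta\in\UU_\beta$ with $\omega<\eta$ in lexicographic order, and let $k\geq 1$ be the first index at which they disagree, so that $\omega_i=\eta_i$ for $i<k$ and $\omega_k<\eta_k$. Factoring out the common initial block yields
\begin{equation*}
\Pi_\beta(\eta)-\Pi_\beta(\omega)=\beta^{-(k-1)}\bigl(\Pi_\beta(\sigma^{k-1}(\eta))-\Pi_\beta(\sigma^{k-1}(\omega))\bigr),
\end{equation*}
so strict monotonicity reduces to showing that the bracketed quantity on the right is strictly positive. I would then observe that the characterization \eqref{eq:U-beta-characterization} is manifestly shift-invariant: if $\sigma^n(\omega)<d$ and $\sigma^n(\bar\omega)<d$ for all $n\geq 0$, the same holds with $\sigma^m(\omega)$ in place of $\omega$ for any $m$. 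Consequently, both $\sigma^{k-1}(\omega)$ and $\sigma^{k-1}(\eta)$ lie in $\UU_\beta$, and by construction their first coordinates satisfy $\omega_k<\eta_k$.

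At this stage I would appeal directly to the proof of Lemma \ref{lem:bi-Lipschitz}. Although the conclusion there is phrased as the two-sided inequality \eqref{eq:bi-Lipschitz}, the lower-bound argument is naturally one-sided: under the assumption $\omega'_1>\eta'_1$, with $\omega',\eta'\in\UU_\beta$, that argument actually establishes the \emph{signed} inequality $\Pi_\beta(\omega')-\Pi_\beta(\eta')>C_1$, where $C_1=(\alpha+1-\beta)/[\beta(\beta-1)]>0$. Applying this with $\omega':=\sigma^{k-1}(\eta)$ and $\eta':=\sigma^{k-1}(\omega)$ gives $\Pi_\beta(\sigma^{k-1}(\eta))-\Pi_\beta(\sigma^{k-1}(\omega))>C_1$, whence $\Pi_\beta(\eta)-\Pi_\beta(\omega)>\beta^{-(k-1)}C_1>0$, completing the proof.

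The only point that needs care is the signed interpretation of the lower bound in Lemma \ref{lem:bi-Lipschitz}; this is the main (minor) obstacle, and it is dispatched simply by re-reading that proof and observing the asymmetric role played by the hypothesis $\omega_1>\eta_1$. Apart from that, no further input — combinatorial or otherwise — about the quasi-greedy expansion $d(\beta)$ is required, and strict monotonicity appears as a near-immediate corollary of the bi-Lipschitz property already in hand.
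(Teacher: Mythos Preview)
Your argument is correct. The paper does not supply its own proof of this lemma; it is merely quoted from \cite{deVries}, so there is no ``paper's proof'' to compare against. Your observation that the signed form of the lower bound already established in the proof of Lemma~\ref{lem:bi-Lipschitz} immediately yields strict monotonicity is sound: the shift-invariance of $\UU_\beta$ (clear from \eqref{eq:U-beta-characterization}) lets you reduce to the case of disagreement at the first coordinate, and the displayed chain in that proof indeed gives $\Pi_\beta(\omega')-\Pi_\beta(\eta')>C_1=(\alpha+1-\beta)/[\beta(\beta-1)]>0$ under the asymmetric hypothesis $\omega'_1>\eta'_1$. No additional ingredients are needed.
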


In view of the last two lemmas, any subset of $\UU_\beta$ has the same Hausdorff dimension and cardinality as its projection under $\Pi_\beta$, and the restriction of $\Pi_\beta$ to $\UU_\beta$ is an order-preserving homeomorphism. This allows us to work entirely within the symbol space $\Omega$, as illustrated by the following important characterization.

\begin{proposition} \label{prop:translation}
Let $d=d(\beta)$. For $\omega\in\UU_\beta$, we have
\begin{equation*}
\limsup_{n\to\infty}\Pi_\beta(\sigma^n(\omega))=1 \qquad \Longleftrightarrow \qquad \limsup_{n\to\infty} \sigma^n(\omega)=d.
\end{equation*}
\end{proposition}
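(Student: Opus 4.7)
The plan is to reformulate the statement combinatorially using the observation made just after the definition of $\rho_\beta$ in the paper: for $\omega\in\UU_\beta$, the equality $\limsup_{n\to\infty}\sigma^n(\omega)=d$ amounts to saying that for every $k\in\NN$ the block $d_1\cdots d_k$ occurs as a factor of $\omega$.

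The direction $(\Rightarrow)$ is then essentially continuity of $\Pi_\beta$. If $d_1\cdots d_k$ occurs in $\omega$ for every $k$, it in fact occurs infinitely often: letting $A_k=\{n:\omega_{n+1}\cdots\omega_{n+k}=d_1\cdots d_k\}$, the sets are nested and nonempty; if some $A_k$ were finite so would all $A_j$ with $j\ge k$ be, and a nested-intersection argument would produce an $n$ with $\sigma^n(\omega)=d$, contradicting $\omega\in\UU_\beta$. Hence we may choose $n_j\to\infty$ with $\omega_{n_j+1}\cdots\omega_{n_j+j}=d_1\cdots d_j$, so that $\rho_\beta(\sigma^{n_j}(\omega),d)\le\beta^{-j}$. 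Because $\Pi_\beta$ is Lipschitz of constant $\alpha/(\beta-1)$ relative to $\rho_\beta$, we get $\Pi_\beta(\sigma^{n_j}(\omega))\to\Pi_\beta(d)=1$; combined with $\Pi_\beta(\sigma^n(\omega))<1$ for all $n$ (which is exactly $\omega\in\UU_\beta$), this yields $\limsup_n\Pi_\beta(\sigma^n(\omega))=1$.

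For $(\Leftarrow)$ I would argue by contraposition. Suppose there is a fixed $k$ such that $d_1\cdots d_k$ never occurs in $\omega$. Since $\sigma^n(\omega)<d$ strictly, for each $n$ there is a first position $i_n\ge 1$ with $\omega_{n+i_n}<d_{i_n}$, and our hypothesis forces $1\le i_n\le k$. Splitting $\Pi_\beta(\sigma^n(\omega))$ at position $i_n$ and using $\omega_{n+i_n}\le d_{i_n}-1$ together with the strict inequality $\Pi_\beta(\sigma^{n+i_n}(\omega))<1$ (which holds because $\sigma^{n+i_n}(\omega)\in\UU_\beta$), a short geometric estimate gives
\[
\Pi_\beta(\sigma^n(\omega))\;<\;\sum_{j=1}^{i_n}\frac{d_j}{\beta^j}\;=\;1-\beta^{-i_n}\Pi_\beta(\sigma^{i_n}d).
\]

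The crux of the argument, and the only real (though modest) obstacle, is to convert this into a uniform bound over $n$. Here one uses that $d$ is the quasi-greedy expansion of $1$, hence infinite, so that $\sigma^i d$ has positive digits arbitrarily far out and thus $\Pi_\beta(\sigma^i d)>0$ for every $i\ge 1$. Setting $c_k:=\min_{1\le i\le k}\beta^{-i}\Pi_\beta(\sigma^i d)>0$ we conclude $\Pi_\beta(\sigma^n(\omega))\le 1-c_k$ for all $n$, contradicting $\limsup_n\Pi_\beta(\sigma^n(\omega))=1$ and completing the proof.
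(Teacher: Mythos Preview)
Your argument is correct; the only slip is that you have swapped the labels $(\Rightarrow)$ and $(\Leftarrow)$: in the displayed equivalence the left-hand side is $\limsup_n\Pi_\beta(\sigma^n\omega)=1$, so the paragraph starting from ``If $d_1\cdots d_k$ occurs in $\omega$ for every $k$'' actually establishes $(\Leftarrow)$, and your contrapositive argument establishes $(\Rightarrow)$.

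The paper disposes of the proposition in one line by citing Lemma~\ref{lem:bi-Lipschitz} (bi-Lipschitz) and Lemma~\ref{lem:increasing-projection} (strict monotonicity of $\Pi_\beta$ on $\UU_\beta$, quoted from \cite{deVries}). Your route is close in spirit but more self-contained: for $(\Leftarrow)$ you use only the easy Lipschitz half of Lemma~\ref{lem:bi-Lipschitz}, and for $(\Rightarrow)$ you bypass both the bi-Lipschitz lower bound and Lemma~\ref{lem:increasing-projection}, replacing them by the explicit tail estimate that yields the uniform gap $\Pi_\beta(\sigma^n\omega)\le 1-c_k$. This is a mild gain: Lemma~\ref{lem:bi-Lipschitz} is stated only for pairs in $\UU_\beta$, and since $d\notin\UU_\beta$ the paper's one-line deduction tacitly needs a small extension step to compare $\sigma^n(\omega)$ with $d$, whereas your estimate handles that comparison directly.
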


\begin{proof}
This is a direct consequence of Lemmas \ref{lem:bi-Lipschitz} and \ref{lem:increasing-projection}.
\end{proof}

\subsection{De Vries-Komornik numbers and $p$-mirror sequences}

An important role in the theory of $\beta$-expansions is played by a certain countably infinite set of bases, which we now introduce. For $a\in A$ with $a<\alpha$, we denote $a+1$ more compactly by $a^+$; and for $a\in A$ with $a>0$ we denote $a-1$ by $a^-$. For a word $t_1\cdots t_n$ in $A^n$, let $t_1\cdots t_n^+:=t_1\cdots t_{n-1}t_n^+$ if $t_n<\alpha$, and $t_1\cdots t_n^-:=t_1\cdots t_{n-1}t_n^-$ if $t_n>0$. The following definition is taken from \cite{KongLi}, with a minor modification -- see Remark \ref{rem:p-mirror} below. 

\begin{definition} \label{def:admissible-block}
{\rm
Let $p\in\NN$. A word $t_1\cdots t_p$ in $A^p$ is {\em admissible} if $t_p<\alpha$ and for each $1\leq i\leq p$ we have
\begin{equation}
\overline{t_i\cdots t_p t_1\cdots t_{i-1}}\leq t_1\cdots t_p^+ \qquad \mbox{and} \qquad t_i\cdots t_p^+\overline{t_1\cdots t_{i-1}}\leq t_1\cdots t_p^+.
\label{eq:admissible-block}
\end{equation}
}
\end{definition}

\begin{definition} \label{def:p-mirror-sequence}
{\rm
A sequence $d=(d_i)$ in $\Omega$ is called a {\em $p$-mirror sequence} if there is $p\in\NN$ and an admissible word $t_1\cdots t_p$ such that
\begin{equation*}
d_1\cdots d_p=t_1\cdots t_p^+,
\end{equation*}
and
\begin{equation*}
d_{2^m p+1}\cdots d_{2^{m+1}p}=\overline{d_1\cdots d_{2^m p}}^{\,+}, \qquad \mbox{for all $m\geq 0$}.
\end{equation*}
In this case we call $t_1\cdots t_p$ a {\em generating word} of $d$.
}
\end{definition}

Observe that there are infinitely many admissible words. For instance, when $\alpha=1$, any word of the form $1^k 0^l$ with $k\geq l\geq 1$ is admissible. As a result, there are infinitely many $p$-mirror sequences.

\begin{remark} \label{rem:p-mirror}
{\rm
The term ``$p$-mirror sequence" was introduced by Allouche and Cosnard \cite{Allouche}. Kong and Li \cite{KongLi} use the term ``generalized Thue-Morse sequence", since the special case $\alpha=1$, $p=1$ and $t_1=0$ results in the (truncated) Thue-Morse sequence $110100110010110\cdots$. Note that, in order for this special case to be included, we slighly weakened the definition from \cite{KongLi}, where the right-hand side of the first inequality in \eqref{eq:admissible-block} is given as $t_1\cdots t_p$ (and the inequality is written in its reflected form). It also seems aesthetically pleasing that the two inequalities in \eqref{eq:admissible-block} have the same right-hand side. In any case, the two definitions lead to the same set of $p$-mirror sequences, since it is easy to see that, if $t_1\cdots t_p$ is admissible in the sense of Definition \ref{def:admissible-block}, then the word $s_1\cdots s_{2p}:=t_1\cdots t_p^+\overline{t_1\cdots t_p^+}$ is admissible in the sense of Kong and Li, and generates the same $p$-mirror sequence. This shows in addition that every $p$-mirror sequence has infinitely many generating words.
}
\end{remark}

Kong and Li \cite{KongLi} show that any $p$-mirror sequence $d$ is the quasi-greedy expansion of $1$ in some base $\beta$, and that this $\beta$ is necessarily transcendental. Slightly modifying their terminology, we shall call such a base $\beta$ a {\em de Vries-Komornik number}. It is clear that there are infinitely many such numbers; the smallest is the Komornik-Loreti constant $\beta_c(\alpha)$, which corresponds to the $1$-mirror sequence generated by $t_1=\lfloor \alpha/2 \rfloor$. See \cite{KongLi} for further details.

Observe that any $p$-mirror sequence $d$ satisfies
\begin{equation*}
\sigma^n(d)<d \qquad \mbox{and} \qquad \sigma^n(\bar{d})<d \qquad \mbox{for all $n\in\NN$},
\end{equation*}
so the corresponding bases lie in $\A(\alpha)$ by a result in \cite{KomLor3}. In fact, it follows from \cite[Lemma 4.2]{KomLor3} and its proof that the set of de Vries-Komornik numbers is dense in $\A(\alpha)$. In \cite{deVries}, they are shown to be exactly the right endpoints of the connected components of $(1,\alpha+1]\backslash \overline{\A(\alpha)}$.

The significance of $p$-mirror sequences for the purposes of this article lies in the following.

\begin{lemma} \label{lem:few-choices}
Let $d$ be a $p$-mirror sequence with corresponding de Vries-Komornik number $\beta$. Define $b_m:=d_1\cdots d_{2^{m-1}p}$, $m\in\NN$. Suppose $\omega=\omega_1\omega_2\cdots\in\UU_\beta$. Then for every $m\in\NN$ and $k\geq 0$, the following statements hold:
\begin{enumerate}[(i)]
\item If $\omega_{k+1}\cdots\omega_{k+2^{m-1}p}=b_m$, then $\omega_{k+1}\cdots \omega_{k+2^m p}=b_m\overline{b_m}$ or $b_{m+1}$.
\item If $\omega_{k+1}\cdots\omega_{k+2^{m-1}p}=\overline{b_m}$, then $\omega_{k+1}\cdots \omega_{k+2^m p}=\overline{b_m}b_m$ or $\overline{b_{m+1}}$.
\end{enumerate}
Vice versa, for any sequence $(j_1,j_2,\dots)$ with $0\leq j_\nu<\infty$ for each $\nu$ and $\sum_{\nu=1}^\infty j_\nu=\infty$, the sequence $\omega$ lies in $\UU_\beta$, where
\begin{equation}
\omega=(b_1\overline{b_1})^{j_1}(b_2\overline{b_2})^{j_2}\cdots (b_\nu\overline{b_\nu})^{j_\nu}\cdots.
\label{eq:longer-and-longer}
\end{equation}
\end{lemma}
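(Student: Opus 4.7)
My plan for parts (i) and (ii) is to apply the lexicographic characterization \eqref{eq:U-beta-characterization} at two well-chosen shifts, exploiting the self-similar structure of $d$. Suppose $\omega\in\UU_\beta$ with $\omega_{k+1}\cdots\omega_{k+2^{m-1}p}=b_m$. By Definition \ref{def:p-mirror-sequence}, $d_1\cdots d_{2^{m-1}p}=b_m$ and $d_{2^{m-1}p+1}\cdots d_{2^m p}=\overline{b_m}^{+}$; hence $\sigma^k(\omega)<d$ forces the next $2^{m-1}p$ digits of $\omega$ to be lexicographically at most $\overline{b_m}^{+}$. Meanwhile, $\sigma^{k+2^{m-1}p}(\bar\omega)<d$, together with the fact that $d$ begins with $b_m$, forces the next $2^{m-1}p$ digits of $\bar\omega$ to be lex at most $b_m$; equivalently, the next $2^{m-1}p$ digits of $\omega$ are lex at least $\overline{b_m}$. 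Since $\overline{b_m}^{+}$ differs from $\overline{b_m}$ only in its last symbol (by $+1$), the only blocks of length $2^{m-1}p$ lying between them in the lex order are $\overline{b_m}$ itself (yielding $b_m\overline{b_m}$) and $\overline{b_m}^{+}$ (yielding $b_m\overline{b_m}^{+}=b_{m+1}$). Part (ii) will follow by applying the same argument to $\bar\omega$ in place of $\omega$ and invoking the identity $\overline{b_{m+1}}=\overline{b_m}b_m^{-}$.

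For the converse, let $\omega=(b_1\overline{b_1})^{j_1}(b_2\overline{b_2})^{j_2}\cdots$ with $\sum j_\nu=\infty$. I must verify $\sigma^n(\omega)<d$ and $\sigma^n(\bar\omega)<d$ for every $n\geq 0$. The decisive observation is that $b_\mu\overline{b_\mu}$ agrees with $b_{\mu+1}=b_\mu\overline{b_\mu}^{+}$ on the first $2^\mu p-1$ positions but has its final symbol smaller by one; so any terminal occurrence of $b_\mu\overline{b_\mu}$ in $\omega$ (i.e., one not extended into a longer $b_{\mu+1}$) produces a strict drop below $d$ at that position. I would argue by case analysis on where $n+1$ sits in the block decomposition of $\omega$: if $n+1$ starts a $b_\mu$ or $\overline{b_\mu}$ block, then $\sigma^n(\omega)$ begins respectively with $b_\mu\overline{b_\mu}$ or $\overline{b_\mu}b_\mu$, and the observation above applies directly by lex comparison with $d=b_\mu\overline{b_\mu}^{+}\cdots$; if $n+1$ is interior to some $b_\mu$ block at offset $j\geq 1$, then $\sigma^n(\omega)$ coincides with $\sigma^j(d)$ on the first $2^\mu p-j-1$ symbols and then decrements, giving $\sigma^n(\omega)<\sigma^j(d)<d$, where the last inequality uses $d\in\A(\alpha)$.

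The main obstacle is the interior case when $n+1$ lies inside an $\overline{b_\mu}$ block: there $\sigma^n(\omega)$ initially mimics a shift of $\bar d$ rather than of $d$, so the direct chain $\sigma^n(\omega)<\sigma^j(d)<d$ is not available. I would handle it by extending the comparison past the end of the current $\overline{b_\mu}$ into the subsequent $b_\mu$ block (always present in our construction, since each $\overline{b_\mu}$ in $\omega$ is followed either by another $b_\mu$ within stage $\mu$ or by $b_{\mu+1}$ which starts with $b_\mu$), and pinning down the first strict decrement from $d$ using the $p$-mirror structure. The parallel inequality $\sigma^n(\bar\omega)<d$ is then handled by the analogous case analysis applied to $\bar\omega=(\overline{b_1}b_1)^{j_1}(\overline{b_2}b_2)^{j_2}\cdots$, with the roles of $b_\mu$ and $\overline{b_\mu}$ exchanged and the decrement observation replaced by its reflected counterpart (using $\overline{b_\mu}b_\mu=\overline{b_{\mu+1}}$ with last symbol increased by one, together with $\bar d<d$).
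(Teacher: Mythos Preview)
Your argument for parts (i) and (ii) is correct and is exactly the standard one; the paper does not reprove it but simply refers to \cite[Lemma~4.11]{KLD}.

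For the converse the paper takes a cleaner route than your block-by-block case analysis. Given $n\geq 0$, it chooses the \emph{largest} $m$ for which $n$ lies inside an aligned $b_m$- or $\overline{b_m}$-block of $\omega$. Maximality then forces the next $2^{m-1}p$ digits to be the complementary block, so that (in the $b_m$ case) $\omega_{k+1}\cdots\omega_{k+2^m p}=b_m\overline{b_m}$, which agrees with the length-$2^m p$ prefix $b_{m+1}$ of $d$ except for a single decrement at the last position. This gives $\sigma^n(\omega)<\sigma^{n-k}(d)\leq d$ in one stroke, subsuming both your ``starts a $b_\mu$'' case and your ``interior to $b_\mu$'' case without separating them. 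For the companion inequality $\sigma^n(\bar\omega)<d$ in this same $b_m$ situation---which, after unwinding the symmetry, is precisely your problematic ``interior to an $\overline{b_\mu}$ block'' case---the paper does not argue directly at all but invokes \cite[Lemma~4.2]{KongLi}.

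So your plan is sound in outline, but the step you flag as the main obstacle is genuinely the crux, and your phrase ``pinning down the first strict decrement from $d$ using the $p$-mirror structure'' is not yet an argument: when the tail $\bar d_{j+1}\cdots\bar d_{2^{\mu-1}p}$ happens to coincide with $d_1\cdots d_{2^{\mu-1}p-j}$, you must continue the comparison into $d_{2^{\mu-1}p-j+1}d_{2^{\mu-1}p-j+2}\cdots$, and locating the first strict drop there requires an induction through the mirror levels rather than a one-line observation. Completing that is essentially reproving the cited Kong--Li lemma, whereas the paper simply quotes it.
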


\begin{proof}
That any $\omega\in\UU_\beta$ satisfies (i) and (ii) was proved in \cite[Lemma 4.11]{KLD} for the case $p=1$. Since the argument for general $p$ is practically the same, we omit it here.

As for the second statement, let $\omega$ be as in \eqref{eq:longer-and-longer}. Given $n\geq 0$, there are integers $k\geq 0$ and $m\in\NN$ such that $k\leq n<k+2^{m-1}p$ and $\omega_{k+1}\cdots \omega_{k+2^{m-1}p}=b_m$ or $\overline{b_m}$. Without loss of generality assume the former, and assume $m$ is the largest integer with this property, so that $\omega_{k+2^{m-1}+1}\cdots\omega_{k+2^m p}=\overline{b_m}$. Since $b_m\overline{b_m}<b_{m+1}$ and these two words differ only in their last digit, it follows that $\sigma^n(\omega)<\sigma^{n-k}(d)\leq d$. Furthermore, it is a consequence of \cite[Lemma 4.2]{KongLi} that $\sigma^n(\bar{\omega})<d$. Hence, by \eqref{eq:U-beta-characterization}, $\omega\in\UU_\beta$.
\end{proof}

It is easy to see that any $p$-mirror sequence $d$ satisfies \eqref{eq:self-reflect}. This leads us to a more precise version of Theorem \ref{thm:Hausdorff-dimension}(iii).

\begin{theorem} \label{thm:Hausdorff-dimension-p-mirror}
If $\beta$ is a de Vries-Komornik number, then $\WW_\beta$ is uncountable but $\dim_H \WW_\beta=0$.
\end{theorem}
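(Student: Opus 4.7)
The plan is to treat the two conclusions separately.

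For the uncountability of $\WW_\beta$, the work has already been done: as observed in the paragraph just before the theorem, any $p$-mirror sequence $d$ satisfies $\sigma^n(\bar d) < d$ for every $n \in \NN$, so condition (iii) of Theorem~\ref{thm:when-non-empty} holds for the associated $\beta$. The implication (iii) $\Rightarrow$ (ii) of that theorem then gives the desired uncountability.

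For the dimension bound, the plan is to exhibit, for each $M \in \NN$, an efficient cylinder cover of $\WW_\beta$ at scale $N := 2^M p$. By Proposition~\ref{prop:translation}, every $\omega \in \WW_\beta$ has $\limsup_n \sigma^n(\omega) = d$ or $\limsup_n \sigma^n(\bar\omega) = d$; in particular, $b_m = d_1\cdots d_{2^{m-1}p}$ appears as a subword of $\omega$ or $\bar\omega$ for every $m$. Combining this with Lemma~\ref{lem:few-choices}, whose parts (i) and (ii) force at most two continuations whenever a $b_m$- or $\overline{b_m}$-block is observed, each length-$N$ prefix of an element of $\WW_\beta$ decomposes hierarchically into blocks $b_\nu,\overline{b_\nu}$ at the nested scales $p, 2p, 4p, \dots, 2^{M-1}p$. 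The canonical members of such a family are indexed by tuples $(j_1,\dots,j_M)$ of nonnegative integers with $\sum_{\nu=1}^M 2^\nu j_\nu = 2^M$ (cf. \eqref{eq:longer-and-longer}), and their count equals the binary partition number $b(2^{M-1})$, which satisfies $\log b(2^{M-1}) = O(M^2) = O((\log N)^2)$. Initial orientation (starting with $b_1$ or $\overline{b_1}$) and partial trailing blocks contribute at most a polynomial factor. Since each cylinder of length $N$ has $\rho_\beta$-diameter $\beta^{1-N}$, for every $s > 0$,
\[
\HH^s(\WW_\beta) \;\leq\; \exp\!\bigl(O((\log N)^2)\bigr) \cdot \beta^{-s(N-1)} \longrightarrow 0 \quad \text{as } N \to \infty,
\]
whence $\dim_H \WW_\beta = 0$.

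The main obstacle will be the combinatorial classification step: Lemma~\ref{lem:few-choices} only constrains the continuation after a recognized $b_m$- or $\overline{b_m}$-block, so one has to check that elements of $\WW_\beta$ cannot escape the hierarchical family described above (for instance by beginning with a $p$-block that is neither $b_1$ nor $\overline{b_1}$, or by appearing to ``drop level''). The crucial ingredient ruling this out is precisely the condition $\limsup_n \sigma^n(\omega) = d$ (or the analogous one for $\bar\omega$), which, in conjunction with $\omega \in \UU_\beta$, forces matching $b_m$-blocks to recur at every scale and thereby pins down the hierarchical decomposition. Once this classification is secured, the subexponential estimate on the number of binary partitions of $2^M$ is standard.
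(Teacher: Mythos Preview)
Your uncountability argument via Theorem~\ref{thm:when-non-empty} is correct and is how the paper handles it as well. The dimension argument, however, has a real gap at the counting step. You assert that, up to orientation and boundary corrections, the length-$N$ prefixes are those of the sequences \eqref{eq:longer-and-longer}, indexed by tuples $(j_1,\dots,j_M)$ with $\sum_\nu 2^\nu j_\nu = 2^M$. But Lemma~\ref{lem:few-choices} generates a full binary \emph{tree} of continuations, and the sequences \eqref{eq:longer-and-longer} correspond only to a proper subfamily of its paths: in the tree a level-up move \emph{extends} the current block $b_m$ (or $\overline{b_m}$) to $b_{m+1}$ (or $\overline{b_{m+1}}$), rather than starting a fresh block aligned with the end of the previous one. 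Concretely, for $\alpha=1$ and $\beta=\beta_c$ (so $p=1$, $b_1=1$, $b_2=11$, $b_3=1101$), the tree path $b_1 \to b_1\overline{b_1} \to b_1\overline{b_2} \to b_1\overline{b_2}b_2 \to b_1\overline{b_2}b_3$ produces the prefix $1001101$, which is not an initial segment of any sequence $(b_1\overline{b_1})^{j_1}(b_2\overline{b_2})^{j_2}\cdots$, nor of any orientation variant thereof. Your binary-partition bound therefore undercounts the cover. The obstacle you flag at the end is likewise genuine and not resolved by the $\limsup$ condition: that condition only ensures a $b_1$-block appears \emph{eventually}, leaving an initial segment that Lemma~\ref{lem:few-choices} does not constrain.

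The paper avoids both difficulties by a different route. It first reduces to $\YY_\beta := \{\omega\in\UU_\beta:\omega_1\cdots\omega_p=b_1\}$, observing that $\WW_\beta$ is covered by countably many similar copies of $\YY_\beta$; then it covers $\YY_\beta$ at each level $n$ by the $2^n$ cylinders $C_{i_1,\dots,i_n}$ of \emph{varying} depths $p(1+\sigma_{i_1,\dots,i_{n-1}})$, and proves a dedicated recursive estimate (Lemma~\ref{lem:binary-tree}) showing $\sum_{i_1,\dots,i_n}\gamma^{\sigma_{i_1,\dots,i_n}}\to 0$ for every $\gamma\in(0,1)$. Your fixed-depth strategy is not hopeless---the correct depth-$N$ antichain count in this tree really is $\exp\bigl(O((\log N)^2)\bigr)$, since each path encodes a restricted binary partition of its depth---but establishing that rigorously requires essentially the same combinatorics as Lemma~\ref{lem:binary-tree}, not the shortcut through \eqref{eq:longer-and-longer}.
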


The proof of Theorem \ref{thm:Hausdorff-dimension-p-mirror} is given at the end of the paper. It implies the statements of Theorem \ref{thm:Hausdorff-dimension}(iii) since there are infinitely many de Vries-Komornik numbers, and $\beta_c$ is the smallest such number.

\begin{proof}[Proof of Proposition \ref{prop:smallness}]
For $1<\beta<\beta_c$, $\UU_\beta$ contains only eventually periodic sequences \cite[Lemma 4.12]{KLD}, and it is clear that an eventually periodic sequence lies in $\widetilde{\UU}_\beta$ whenever it lies in $\UU_\beta$. Hence, $\WW_\beta=\emptyset$ for all $1<\beta<\beta_c$.

On the other hand, according to \cite[Theorem 2.5]{KongLi}, the interval $(\beta_c,\alpha)$ is covered, with the exception of a Lebesgue-null set, by the collection of intervals $(\beta_L,\beta_R):=(\beta_L(t_1\cdots t_p),\beta_R(t_1\cdots t_p))$, where $t_1\cdots t_p$ ranges over all admissible words (see Definition \ref{def:admissible-block}) of all lengths $p\geq 2$; $d(\beta_L)=(t_1\cdots t_p)^\infty$; and $d(\beta_R)$ is the $p$-mirror sequence with generating word $t_1\cdots t_p$. It is implicit in Theorem 1.7 of \cite{deVries} that any such interval $(\beta_L,\beta_R)$ can be written as a countable union of half open intervals $(\beta_i,\beta_{i+1}]$, $i=1,2,\dots$, so-called {\em stability intervals}, with the property that for each $i$, $\UU_\beta$ is constant on $(\beta_i,\beta_{i+1}]$. Combined with Lemma \ref{lem:containment}, this yields that $\WW_\beta=\emptyset$ for $\beta\in(\beta_i,\beta_{i+1}]$, and hence for $\beta\in(\beta_L,\beta_R)$. As a result, the set $\{\beta\in(1,\alpha+1): \WW_\beta=\emptyset\}$ contains a dense open set of full Lebesgue measure. Both properties stated in the proposition now follow.
\end{proof}

\begin{remark}
{\rm
One can say a bit more about the intervals $(\beta_L,\beta_R)$ in the above proof: By \cite[Theorem 2.6]{KongLi}, the Hausdorff dimension of $\UU_\beta$ with respect to a {\em fixed} metric $\rho_{\beta_0}$ is constant on each such interval, although $\UU_\beta$ itself is not.
}
\end{remark}


\section{Proofs of Theorems \ref{thm:when-non-empty} and \ref{thm:rate-of-convergence}} \label{sec:main-proof}

We begin by introducing the basic building blocks for constructing sequences in $\WW_\beta$.

\begin{definition}
{\rm
Let $d=(d_i)_i\in\Omega$. A sequence $\MM=(m_k)_{k\in\NN}$ of positive integers is {\em $d$-positive} if $d_{m_k}>0$ for every $k\in\NN$.
}
\end{definition}

\begin{definition} \label{def:building-block}
{\rm
For a $d$-positive sequence $\MM=(m_k)_{k\in\NN}$, we define a sequence $\omega(\MM)\in\Omega$ by
\begin{equation*}
\omega(\MM):=\prod_{k=1}^\infty d_1\cdots d_{m_k}^-:=d_1\cdots d_{m_1}^-\,d_1\cdots d_{m_2}^-\cdots.
\end{equation*}
}
\end{definition}

It follows immediately that, if $d$ is the quasi-greedy expansion of $1$ in some base $\beta$, then 
\begin{equation}
\sigma^n(\omega(\MM))<d\quad\mbox{for every $n\geq 0$}. 
\label{eq:small-enough}
\end{equation}
Moreover,
\begin{equation}
\limsup_{n\to\infty} \sigma^n(\omega(\MM))=d \qquad \Longleftrightarrow \qquad \limsup_{k\to\infty} m_k=\infty.
\label{eq:limsup-equivalence}
\end{equation}
Observe also that the mapping $\MM\mapsto \omega(\MM)$ is an injection: If $\MM_1=(m_k^{(1)})$ and $\MM_2=(m_k^{(2)})$ are different $d$-positive sequences, then $\omega(\MM_1)\neq\omega(\MM_2)$.

The critical step in the proofs below is to choose a $d$-positive sequence $\MM=(m_k)$ that satisfies the properties in \eqref{eq:limsup-equivalence} and 
\begin{equation}
\sigma^n\big(\overline{\omega(\MM)}\big)<d \quad\mbox{for every $n\geq 0$},
\label{eq:large-enough}
\end{equation}
so that $\omega(\MM)\in\UU_\beta$ in view of \eqref{eq:U-beta-characterization} and \eqref{eq:small-enough}.
Doing so entails slightly different subtleties in the two cases (i) $\limsup \sigma^n(\bar{d})<d$ and (ii) $\limsup \sigma^n(\bar{d})=d$. We deal with these two cases in Theorems \ref{thm:key-construction} and \ref{thm:critical-case}, respectively.

\begin{theorem} \label{thm:key-construction}
Let $d=d(\beta)$, and suppose
\begin{equation}
\sup_{n\in\NN}\sigma^n(\bar{d})<d. 
\label{eq:subcritical-case}
\end{equation}
Then there is a strictly increasing $d$-positive sequence $\MM_0=(m_k)$ such that, whenever $\MM=(\mu_j)$ is a sequence such that $\mu_j$ is a term of $\MM_0$ for each $j$, we have $\sigma^n\big(\overline{\omega(\MM)}\big)<d$ for all $n\geq 0$.
\end{theorem}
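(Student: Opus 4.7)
The plan is to exploit the uniform lexicographic slack supplied by the hypothesis to control every shift of $\overline{\omega(\MM)}$ simultaneously. First I would fix an integer $K$ large enough that, for some word $v=v_1\cdots v_K$ with $v<d_1\cdots d_K$, one has $\bar{d}_{n+1}\cdots\bar{d}_{n+K}\leq v$ lexicographically for every $n\geq 0$; let $k\leq K$ denote the first index at which $v$ disagrees with $d$. Then I would take $\MM_0=(m_j)$ to be a strictly increasing $d$-positive sequence whose terms all exceed $K$, refined as described below.

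For any $\MM=(\mu_j)$ drawn from $\MM_0$, decompose $\overline{\omega(\MM)}=B_1B_2\cdots$, where $B_\ell:=\bar{d}_1\cdots\bar{d}_{\mu_\ell-1}\bar{d}_{\mu_\ell}^+$, and verify $\sigma^n(\overline{\omega(\MM)})<d$ by a case analysis on the position of $n+1$ within the blocks. Let $i$ be the offset of $n+1$ within its block $B_\ell$ and $t:=\mu_\ell-i$ the remaining tail length. If $t>K$, the first $K$ digits of $\sigma^n(\overline{\omega(\MM)})$ are a length-$K$ factor of $\bar{d}$ and so are lex at most $v<d_1\cdots d_K$. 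If $t\leq K$, examine the first disagreement between $\bar{d}_{i+1}\cdots\bar{d}_{i+t}$ and $d_1\cdots d_t$: in all but one sub-case, this disagreement is strictly to the left of position $t$, so the anomaly $\bar{d}_{\mu_\ell}^+$ at position $t$ leaves it undisturbed and the strict inequality with $d$ is preserved.

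The main obstacle is the delicate sub-case in which $t=k$, the initial digits $\bar{d}_{i+1}\cdots\bar{d}_{i+k-1}$ match $d_1\cdots d_{k-1}$, and $\bar{d}_{i+k}=d_k-1$, so that the $+1$ increment brings the first $k$ digits of $\sigma^n(\overline{\omega(\MM)})$ to coincide with $d_1\cdots d_k$ exactly; then the analysis must continue into $B_{\ell+1}$ and one must establish $B_{\ell+1}<\sigma^k(d)$ eventually. To do so, I would first rule out $\bar{d}=\sigma^k(d)$ using the hypothesis---the identity forces $d_j+d_{k+j}=\alpha$ for every $j$, hence $d$ periodic with period $2k$, hence $\sup_n\sigma^n(\bar{d})=d$, contradicting the assumption---and then invoke the consequence $d_n\geq\alpha-d_1$ of the sup bound (so that $\bar{d}_1\leq d_{k+1}$) to iterate and locate a first position $p$ where $\bar{d}_p<d_{k+p}$, yielding $\bar{d}<\sigma^k(d)$. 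The final step is to require the terms of $\MM_0$ to exceed $p$ and, if necessary, to exclude the sparse set of lengths $m$ for which the same delicate configuration could be re-triggered inside $B_{\ell+1}$; the complement remains infinite and $d$-positive, furnishing the desired $\MM_0$.
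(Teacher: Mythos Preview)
The proposal has a genuine gap. Your case analysis when $t\le K$ presupposes that $\bar d_{i+1}\cdots\bar d_{i+t}$ and $d_1\cdots d_t$ actually disagree somewhere within the first $t$ positions, but this need not hold. If $t<k$ and the first place where $\bar d_{i+1}\bar d_{i+2}\cdots$ disagrees with $d$ lies beyond position $t$, then $\bar d_{i+1}\cdots\bar d_{i+t}=d_1\cdots d_t$ exactly; the last $t$ symbols of $B_\ell$ are therefore $d_1\cdots d_{t-1}(d_t+1)$, which is lexicographically \emph{greater} than $d_1\cdots d_t$, and hence $\sigma^n\big(\overline{\omega(\MM)}\big)>d$, the opposite of what you need. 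Nothing in your construction of $\MM_0$ (``$d$-positive with terms exceeding $K$, then exclude a sparse set tied to re-triggering the $t=k$ scenario'') rules this out: for example, with $\alpha\ge 2$ and $d_1<\alpha$, any $d$-positive index $m$ with $\bar d_m=d_1$ already triggers the failure at $t=1$, and such indices can be plentiful.

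The paper's proof sidesteps this by \emph{not} comparing the block tail with $d_1\cdots d_t$ at all. Instead it manufactures $\MM_0$ so that each $m_k$ is exactly one step past a position $n_k$ where $\bar d$ matches $d_1\cdots d_{k_0}$ (with $k_0$ the longest length for which such matches occur infinitely often), forcing $\bar d_{m_k}<d_{k_0+1}$. The payoff is that the tail of the current reflected block equals $d_{i+1}\cdots d_{k_0}\,\bar d_{m_k}^{\,+}\le d_{i+1}\cdots d_{k_0+1}$, and the subsequent blocks can be shown to be $<d_{k_0+2}d_{k_0+3}\cdots$, yielding $\sigma^n(\bar\omega)<\sigma^i(d)\le d$. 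It is precisely this carefully engineered placement of $m_k$ that makes the comparison with a \emph{shift} of $d$ available; a generic $d$-positive choice of terms, as in your proposal, does not provide it.
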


\begin{proof}
Let $\KK$ be the set of those positive integers $k$ for which the sequence $\bar{d}$ contains the word $d_1\cdots d_k$ infinitely many times. By the hypothesis \eqref{eq:subcritical-case}, $\KK$ is finite. Let $k_0:=0$ if $\KK=\emptyset$, and $k_0:=\max\KK$ otherwise. Let $l_0$ be the largest integer $l$ for which $\bar{d}$ contains the word $d_1\cdots d_l$ (or $l_0:=0$ if no such $l$ exists), so $l_0\geq k_0$. If $l_0=k_0$, set $N:=0$; otherwise, set
\begin{equation*}
N_k:=\max\{n\geq 0: \overline{d_{n+1}\cdots d_{n+k}}=d_1\cdots d_k\}+k, \qquad k=k_0+1,\dots,l_0,
\end{equation*}
and
\begin{equation*}
N:=\max\{N_k: k=k_0+1,\dots,l_0\}.
\end{equation*}
Now set $m_0:=N$, and define recursively, for $k=1,2,\dots$,
\begin{align*}
n_k:&=\begin{cases}\min\{n>m_{k-1}: \overline{d_{n+1}\cdots d_{n+k_0}}=d_1\cdots d_{k_0}\} & \mbox{if $k_0>0$},\\
m_{k-1}+1 & \mbox{if $k_0=0$},
\end{cases}\\
m_k:&=n_k+k_0+1.
\end{align*}
It is clear that the sequence $\MM_0:=(m_k)_k$ is strictly increasing. Furthermore, we claim that $\MM_0$ is $d$-positive. To see this, fix $k\in\NN$, and note that on the one hand,
\begin{equation}
\overline{d_{n_k+1}\cdots d_{n_k+k_0}}=d_1\cdots d_{k_0},
\label{eq:match-up-to-here}
\end{equation}
by definition of $n_k$; while on the other hand,
\begin{equation}
\overline{d_{n_k+1}\cdots d_{n_k+k_0+1}}<d_1\cdots d_{k_0+1}.
\label{eq:but-no-farther}
\end{equation}
The last inequality is immediate if $l_0=k_0$, and otherwise follows since $n_k\geq n_1>N\geq N_{k_0+1}$. 
Together, \eqref{eq:match-up-to-here} and \eqref{eq:but-no-farther} yield 
\begin{equation}
\bar{d}_{m_k}<d_{k_0+1}\leq\alpha, 
\label{eq:d-positive-property}
\end{equation}
and as a result, $d_{m_k}>0$.

Now let $\MM=(\mu_j)_j$ be a sequence such that for each $j$, there is a $k$ such that $\mu_j=m_k$. For any such pair $(j,k)$, we also write $\nu_j:=n_k$. Consider the sequence $\omega:=\omega(\MM)$; our goal is to show that $\sigma^n(\bar{\omega})<d$ for all $n\geq 0$. Set $s_0:=0$ and $s_j:=\mu_1+\dots+\mu_j$, $j\geq 1$. Then the sequence $(s_j)$ is strictly increasing, and
\begin{align}
\omega_{s_j}&=d_{\mu_j}^-, \qquad j\in\NN, \notag \\
\omega_{s_j+i}&=d_i, \qquad i=1,2,\dots,\mu_{j+1}-1, \quad j\geq 0. \label{eq:match-here}
\end{align}
Fix $n\geq 0$, and let $j$ be the integer such that $s_j\leq n<s_{j+1}$. We consider two cases.

\bigskip
{\em Case 1.} Suppose $s_j\leq n<s_j+\nu_{j+1}$, or equivalently, $s_j\leq n<s_{j+1}-k_0-1$. Let $l:=s_{j+1}-n-1$. Then $l>k_0$. If in fact $l>l_0$, then we have immediately that
\begin{equation}
\overline{d_{n-s_j+1}\cdots d_{s_{j+1}-s_j-1}}<d_1\cdots d_l,
\label{eq:match-less-than-l}
\end{equation}
by definition of $l_0$. Suppose $k_0<l\leq l_0$. Then 
\begin{align*}
N_l&\leq N<n_1\leq \nu_{j+1}\leq \nu_{j+1}+k_0\\
&=\mu_{j+1}-1=s_{j+1}-s_j-1\\
&=n-s_j+l;
\end{align*}
that is, $N_l<n-s_j+l$. Thus, by definition of $N_l$, \eqref{eq:match-less-than-l} holds also in this case.
But \eqref{eq:match-less-than-l} and \eqref{eq:match-here} together imply
\begin{equation*}
\overline{\omega_{n+1}\cdots\omega_{s_{j+1}-1}}=\overline{d_{n-s_j+1}\cdots d_{s_{j+1}-s_j-1}}<d_1\cdots d_l,
\end{equation*}
and so $\sigma^n(\bar{\omega})<d$.

\bigskip
{\em Case 2.} Suppose $s_j+\nu_{j+1}\leq n<s_{j+1}$. Note that we have
\begin{equation}
\sigma^n(\bar{\omega})=\overline{d_{n-s_j+1}\cdots d_{\mu_{j+1}}}^{\,+}\prod_{r=j+2}^\infty \overline{d_1\cdots d_{\mu_r}}^{\,+}.
\label{eq:long-product-expression}
\end{equation}
We first claim that
\begin{equation}
\overline{d_{k_0+2}\cdots d_{k_0+\mu_{j+2}}}<d_1\cdots d_{\mu_{j+2}-1}.
\label{eq:case2-claim}
\end{equation}
Since $\sigma^{k_0+1}(\bar{d})<d$, we have ``$\leq$". And equality cannot hold by the definition of $l_0$, since
\begin{equation*}
\mu_{j+2}-1\geq m_1-1=n_1+k_0\geq l_0,
\end{equation*}
where the last inequality is obvious if $l_0=k_0$, and follows otherwise since $n_1+k_0\geq n_1>N\geq N_{l_0}\geq l_0$.

Reflecting both sides of \eqref{eq:case2-claim} gives
\begin{equation*}
\overline{d_1\cdots d_{\mu_{j+2}-1}}<d_{k_0+2}\cdots d_{k_0+\mu_{j+2}},
\end{equation*}
and therefore,
\begin{equation}
\prod_{r=j+2}^\infty \overline{d_1\cdots d_{\mu_r}}^{\,+}<d_{k_0+2}d_{k_0+3}\cdots.
\label{eq:tail-comparison}
\end{equation}
On the other hand, with $i:=n-s_j-\nu_{j+1}$, we have
\begin{align*}
\overline{d_{n-s_j+1}\cdots d_{\mu_{j+1}}}^{\,+}
&=\overline{d_{\nu_{j+1}+i+1}\cdots d_{\nu_{j+1}+k_0}d_{\mu_{j+1}}}^{\,+}\\
&=d_{i+1}\cdots d_{k_0}\overline{d_{\mu_{j+1}}}^{\,+}\\
&\leq d_{i+1}\cdots d_{k_0}d_{k_0+1},
\end{align*}
where the second equality follows by definition of $\nu_{j+1}$, and the inequality follows by \eqref{eq:d-positive-property}. Putting this last development together with \eqref{eq:tail-comparison} and substituting in \eqref{eq:long-product-expression} gives
\begin{equation*}
\sigma^n(\bar{\omega})<\sigma^i(d)\leq d.
\end{equation*}
In both cases, the conclusion of the theorem follows.
\end{proof}

\begin{theorem} \label{thm:critical-case}
Let $d=d(\beta)$, and suppose that $\sigma^n(\bar{d})<d$ for every $n\in\NN$, but
\begin{equation}
\limsup_{n\to\infty}\sigma^n(\bar{d})=d. 
\label{eq:critical-case}
\end{equation}
Then there is a strictly increasing $d$-positive sequence $\MM_0=(m_k)$ such that, for any subsequence $\MM=(m_{k(j)})$ of $\MM_0$, we have $\sigma^n\big(\overline{\omega(\MM)}\big)<d$ for all $n\geq 0$.
\end{theorem}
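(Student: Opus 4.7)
My plan is to adapt the architecture of the proof of Theorem \ref{thm:key-construction}, but since the constants $k_0$ and $l_0$ used there are now infinite, to replace them with quantities that grow with the block index and are preserved under passing to arbitrary subsequences.

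First, I introduce the match-length function
\[
L(n) := \max\{l \geq 0 : \bar{d}_{n+1} \cdots \bar{d}_{n+l} = d_1 \cdots d_l\},
\]
observing that $L(n) < \infty$ for every $n$ (since $\sigma^n(\bar{d}) < d$) while $L$ is unbounded (by \eqref{eq:critical-case}). Let $R_1 < R_2 < \cdots$ enumerate the record positions of $L$ and set $P_k := L(R_k)$; then $(P_k)$ is strictly increasing with $P_k \to \infty$, and $R_k \to \infty$ as well. This allows me to extract a subsequence $k_1 < k_2 < \cdots$ satisfying the spacing condition $R_{k_{j+1}} > P_{k_j} + 1$. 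I then define $\MM_0 = (m_j)$ by $n_j := R_{k_j}$, $p_j := P_{k_j}$, and $m_j := n_j + p_j + 1$. Strict monotonicity of $(m_j)$ is immediate, and $d$-positivity holds because the match at $n_j$ has length exactly $p_j$, forcing $\bar{d}_{m_j} < d_{p_j+1} \leq \alpha$, hence $d_{m_j} \geq 1$.

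Now I take an arbitrary subsequence $\MM = (\mu_j) = (m_{\phi(j)})$ of $\MM_0$ and set $\nu_j := n_{\phi(j)}$, $\pi_j := p_{\phi(j)}$, $\omega := \omega(\MM)$, and $s_j := \mu_1 + \cdots + \mu_j$. Fix $n \geq 0$ with $s_j \leq n < s_{j+1}$ and put $r := n - s_j$. The argument splits into the two cases of Theorem \ref{thm:key-construction}. In Case 1 ($r < \nu_{j+1}$), since $\nu_{j+1}$ is a record position of $L$ with record value $\pi_{j+1}$, every $r' < \nu_{j+1}$ satisfies $L(r') < \pi_{j+1}$; this gives $L(r) < \pi_{j+1} \leq \mu_{j+1} - 1 - r$, so $\sigma^n(\bar{\omega})$ drops strictly below $d$ within its first $\mu_{j+1} - 1 - r$ digits, all of which are read off the $(j+1)$-st block.

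For Case 2 ($\nu_{j+1} \leq r \leq \mu_{j+1} - 1$), setting $i := r - \nu_{j+1} \in [0,\pi_{j+1}]$, I mimic the factorization
\[
\sigma^n(\bar{\omega}) = d_{i+1} \cdots d_{\pi_{j+1}} (\bar{d}_{\mu_{j+1}}+1) \cdot \prod_{r \geq j+2} \overline{d_1 \cdots d_{\mu_r}}^{\,+},
\]
bound the first factor by $d_{i+1} \cdots d_{\pi_{j+1}+1}$ using $\bar{d}_{\mu_{j+1}} + 1 \leq d_{\pi_{j+1}+1}$, and reduce the tail bound to the analogue of \eqref{eq:case2-claim}, namely $L(\pi_{j+1}+1) < \mu_{j+2} - 1$. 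The main obstacle, compared to the subcritical proof, appears precisely here, since the finite $l_0$ is no longer available; what saves the day is the spacing condition $R_{k_{j+1}} > P_{k_j}+1$, which for any subsequence forces $\nu_{j+2} \geq n_{\phi(j+1)+1} > p_{\phi(j+1)} + 1 = \pi_{j+1} + 1$, so the records property applies at position $\pi_{j+1}+1$ and yields $L(\pi_{j+1}+1) < \pi_{j+2} \leq \mu_{j+2} - 2$ as required. Combining the two factor bounds then gives $\sigma^n(\bar{\omega}) < \sigma^i(d) \leq d$, completing the proof.
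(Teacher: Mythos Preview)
Your proof is correct and shares the paper's two-case skeleton, but the construction of $\MM_0$ is genuinely different. The paper builds $\MM_0$ recursively: starting from $l_1:=1$, it takes $n_k$ to be the \emph{first} position where the run length $t_n$ reaches the threshold $l_k$, sets $m_k:=n_k+t_{n_k}+1$, and then updates $l_{k+1}:=\max\{t_n:n\le m_k\}+1$. You instead isolate the \emph{record} positions of the match-length function $L$ and impose the spacing condition $R_{k_{j+1}}>P_{k_j}+1$. Both yield strictly increasing $d$-positive sequences, and the block-by-block verification that $\sigma^n(\overline{\omega(\MM)})<d$ proceeds along the same lines.

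Your approach buys two things. First, subsequence-stability is built in: each $\nu_{j}$ remains a record position of $L$, and the spacing survives because $n_{\phi(j+2)}\ge n_{\phi(j+1)+1}>p_{\phi(j+1)}+1$; the paper, by contrast, works out only the case $\MM=\MM_0$ and asserts that subsequences are ``essentially the same''. Second, your Case~2 tail estimate is marginally sharper: by comparing directly with $\sigma^i(d)$ rather than with $d$, you need the single bound $L(\pi_{j+1}+1)<\mu_{j+2}-1$, whereas the paper's route (bounding the first factor further by $d_1\cdots d_{m_k-n_k-i}$) requires $t_{m_k-n_k-i}<m_{k+1}-1$ across the whole range $1\le m_k-n_k-i\le m_k-n_k$.

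One small technical point, shared with the paper: in Case~1 at $r=0$ you are using $\bar d<d$ (equivalently $L(0)<\infty$), which is not literally contained in the hypothesis ``$\sigma^n(\bar d)<d$ for $n\in\NN$'' but is standard in this setting. If your records are enumerated from $n\ge 1$, you should either extend the domain of $L$ to include $0$ or choose $k_1$ large enough that $P_{k_1}>L(0)$.
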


\begin{proof}
We first define the {\em run lengths}
\begin{equation*}
t_n:=\max\{j\in\NN: \overline{d_{n+1}\cdots d_{n+j}}=d_1\cdots d_j\}, \qquad n\in\NN,
\end{equation*}
where we set $\max\emptyset:=0$. Note that \eqref{eq:critical-case} implies $\limsup_{n\to\infty}t_n=\infty$. Set $l_1:=1$, and
recursively, for $k=1,2,\dots$, define:
\begin{gather*}
n_k:=\min\{n\geq 0: \overline{d_{n+1}\cdots d_{n+l_k}}=d_1\cdots d_{l_k}\},\\
m_k:=n_k+\min\{i\in\NN:\bar{d}_{n_k+i}<d_i\},\\
l_{k+1}:=\max\{t_n: n\leq m_{k}\}+1.
\end{gather*}
Then $m_k>n_k>m_{k-1}$, and $\bar{d}_{m_k}<d_{m_k-n_k}$, so $d_{m_k}>0$. Hence $\MM_0:=(m_k)$ is a strictly increasing $d$-positive sequence. For the sake of readability, we will prove the theorem only for the sequence $\MM_0$ itself. The proof for arbitrary subsequences, while notationally more cumbersome, is essentially the same.

Let $\omega:=\omega(\MM_0)$; we will show that $\sigma^n(\bar{\omega})<d$ for all $n\geq 0$. Set $s_0:=0$, and $s_k:=m_1+\dots+m_k$, $k\in\NN$. As in the proof of Theorem \ref{thm:key-construction}, the sequence $(s_k)$ is strictly increasing, and
\begin{align}
\omega_{s_k}&=d_{m_k}^-, \qquad k\in\NN, \notag \\
\omega_{s_k+i}&=d_i, \qquad i=1,2,\dots,m_{k+1}-1, \quad k\geq 0. \label{eq:match-here-again}
\end{align}
Fix $n\geq 0$, and let $k$ be the integer such that $s_{k-1}\leq n<s_{k}$. We consider two cases.

\bigskip
{\em Case 1.} Suppose $s_{k-1}\leq n<s_{k-1}+n_{k}$. We claim that
\begin{equation}
\overline{d_{n-s_{k-1}+1}\cdots d_{m_{k}-1}}<d_1\cdots d_{m_{k}-(n-s_{k-1})-1}.
\label{eq:case1-shorter-match}
\end{equation}
Since $\sigma^{n-s_{k-1}}(\bar{d})<d$ we have ``$\leq$", and equality cannot hold by the choice of $n_{k}$, because $n-s_{k-1}<n_{k}$ and
$m_{k}-(n-s_{k-1})-1>m_{k}-n_{k}-1\geq l_{k}$.

Using \eqref{eq:match-here-again}, \eqref{eq:case1-shorter-match} can be written as
\begin{equation*}
\overline{\omega_{n+1}\cdots\omega_{s_{k}-1}}<d_1\cdots d_{m_{k}-(n-s_{k-1})-1},
\end{equation*}
and so $\sigma^n(\bar{\omega})<d$.

\bigskip
{\em Case 2.} Suppose $s_{k-1}+n_{k}\leq n<s_{k}$. Let $i:=n-s_{k-1}-n_{k}$, so $0\leq i<m_{k}-n_{k}$. We claim first that
\begin{equation}
\overline{d_{m_k-n_k-i+1}\cdots d_{m_k-n_k-i+m_{k+1}-1}}<d_1\cdots d_{m_{k+1}-1}.
\label{eq:case2-shorter-match}
\end{equation}
Since $\sigma^{m_k-n_k-i}(\bar{d})<d$ we have ``$\leq$", and equality cannot hold by the choice of $n_{k+1}$, since $m_{k+1}-1\geq m_{k+1}-n_{k+1}>l_{k+1}$, and $m_k-n_k-i<m_k<n_{k+1}$.

Taking complements in \eqref{eq:case2-shorter-match} gives
\begin{equation*}
\overline{d_1\cdots d_{m_{k+1}-1}}<d_{m_k-n_k-i+1}\cdots d_{m_k-n_k-i+m_{k+1}-1},
\end{equation*}
and hence
\begin{equation}
\prod_{r=k+1}^\infty \overline{d_1\cdots d_{m_r}}^{\,+}<\sigma^{m_k-n_k-i}(d).
\label{eq:this-tail-smaller}
\end{equation}
On the other hand,
\begin{align*}
\overline{d_{n-s_{k-1}+1}\cdots d_{m_k}}^{\,+}&=\overline{d_{n_k+i+1}\cdots d_{m_k}}^{\,+}\\
&=d_{i+1}\cdots d_{m_k-n_k-1}\overline{d_{m_k}}^{\,+}\\
&\leq d_{i+1}\cdots d_{m_k-n_k-1}d_{m_k-n_k}\\
&\leq d_1\cdots d_{m_k-n_k-i},
\end{align*}
where the second equality follows by the definitions of $n_k$ and $m_k$; the first inequality follows since by the choice of $m_k$, $\bar{d}_{m_k}<d_{m_k-n_k}$; and the last inequality follows since $\sigma^i(d)\leq d$. Combining this last development with \eqref{eq:this-tail-smaller}, we obtain (compare with \eqref{eq:long-product-expression}) 
\begin{align*}
\sigma^n(\bar{\omega})&=\overline{d_{n-s_{k-1}+1}\cdots d_{m_k}}^{\,+}\prod_{r=k+1}^\infty \overline{d_1\cdots d_{m_r}}^{\,+}\\
&<d_1\cdots d_{m_k-n_k-i} \sigma^{m_k-n_k-i}(d)=d.
\end{align*}
This completes the proof.
\end{proof}

\begin{proof}[Proof of Theorem \ref{thm:when-non-empty}]
(i) $\Rightarrow$ (iii): If $\sigma^{n_0}(\bar{d})\geq d$ for some $n_0$, then $\WW_\beta=\emptyset$. To see this, let $\omega\in\UU_\beta$. We claim that
\begin{equation}
\omega_{n+1}\cdots \omega_{n+n_0}<d_1\cdots d_{n_0} \qquad\mbox{for all $n\geq 0$}.
\label{eq:cannot-follow}
\end{equation}
Let $n\geq 0$. Since $\omega\in\UU_\beta$ we have $\sigma^{n+n_0}(\bar{\omega})<d$, and so $\sigma^{n_0}(\bar{d})\geq d>\sigma^{n+n_0}(\bar{\omega})$. Taking complements this gives $\sigma^{n_0}(d)<\sigma^{n+n_0}(\omega)$. Writing
\begin{equation*}
\sigma^n(\omega)=\omega_{n+1}\cdots\omega_{n+n_0}\sigma^{n+n_0}(\omega), \qquad d=d_1\cdots d_{n_0}\sigma^{n_0}(d),
\end{equation*}
and using that $\sigma^n(\omega)<d$ since $\omega\in\UU_\beta$, \eqref{eq:cannot-follow} follows. But \eqref{eq:cannot-follow} in turn implies that $\limsup \sigma^n(\omega)<d$. Interchanging the roles of $\omega$ and $\bar{\omega}$ (and using that $\omega\in\UU_\beta$ if and only if $\bar{\omega}\in\UU_\beta$), the same argument gives $\limsup \sigma^n(\bar{\omega})<d$. Thus, by Proposition \ref{prop:translation} and \eqref{eq:alternative-representation-of-U-tilde}, $\omega\in\widetilde{\UU}_\beta$.

(iii) $\Rightarrow$ (ii): Since any strictly increasing sequence $(m_k)$ has uncountably many different subsequences, this implication follows from the injectivity of the map $\MM\mapsto\omega(\MM)$, together with Theorem \ref{thm:key-construction} in case $\limsup \sigma^n(\bar{d})<d$, or Theorem \ref{thm:critical-case} in case $\limsup \sigma^n(\bar{d})=d$.

Finally, (ii) $\Rightarrow$ (i) is obvious, and the equivalence (iii) $\Leftrightarrow$ (iv) was proved in \cite{deVries}, as pointed out earlier.
\end{proof}


\begin{proof}[Proof of Theorem \ref{thm:rate-of-convergence}]
(i) Assume that $\WW_\beta\neq\emptyset$, and let a sequence $(\theta_n)$ of positive numbers be given. Let $\MM_0=(m_k)$ be the sequence from Theorem \ref{thm:key-construction} or Theorem \ref{thm:critical-case}, as appropriate, and extract from $\MM_0$ a subsequence $\MM:=(m_{k(j)})$ which grows sufficiently fast so that
\begin{equation}
\beta^{-m_{k(j+1)}}\leq \theta_{s_j}^{-1} \qquad\mbox{for every $j\in\NN$},
\label{eq:grow-fast-enough}
\end{equation}
where $s_j:=m_{k(1)}+\dots+m_{k(j)}$. This can always be done, since the sequence $(m_k)$ increases to $+\infty$. Let $\omega:=\omega(\MM)$. Theorem \ref{thm:key-construction} or \ref{thm:critical-case} guarantees $\omega\in\UU_\beta$. By Lemma \ref{lem:bi-Lipschitz}, there is a constant $K>0$ such that
\begin{equation*}
|1-\Pi_\beta(\sigma^n(\omega))|=|\Pi_\beta(d)-\Pi_\beta(\sigma^n(\omega))|\leq K\rho_\beta(d,\sigma^n(\omega)), \qquad n\in\NN.
\end{equation*}
Taking $n=s_j$ and using \eqref{eq:grow-fast-enough} and the definition of the metric $\rho_\beta$, we obtain
\begin{equation*}
1-\Pi_\beta(\sigma^{s_j}(\omega))\leq K\beta^{-m_{k(j+1)}}\leq K\theta_{s_j}^{-1}, \qquad j\in\NN,
\end{equation*}
and hence, 
\begin{equation}
\liminf_{n\to\infty} \theta_n\big(1-\Pi_\beta(\sigma^n(\omega))\big)\leq K<\infty.
\label{eq:converge-fast-enough}
\end{equation}
Finally, since there are clearly uncountably many subsequences $(m_{k(j)})$ satisfying \eqref{eq:grow-fast-enough} and the map $\MM\mapsto \omega(\MM)$ is injective, there are uncountably may sequences $\omega\in\WW_\beta$ satisfying \eqref{eq:converge-fast-enough}.

(ii) Since \eqref{eq:both-infinite} is satisfied for all $\omega\in\widetilde{\UU}_\beta$ and $\UU_\beta$ is uncountable for every $\beta>\beta_c$, the second statement follows immediately from Lemma \ref{lem:containment} for $\beta>\beta_c$. Assume $\beta=\beta_c$, and recall that $d:=d(\beta)$ is a $p$-mirror sequence with $p=1$. Let $b_i:=d_1\cdots d_{2^{i-1}}$, $i\in\NN$. Choose a sequence $(k_i)_{i\in\NN}$ so that 
\begin{equation}
\theta_{k_i}\geq\beta^{2^{i+1}}, \qquad i\in\NN,
\label{eq:super-fast-growth}
\end{equation}
and let
\begin{equation}
\omega=(b_1\overline{b_1})^{k_1}(b_2\overline{b_2})^{k_2}(b_3\overline{b_3})^{k_3}\cdots.
\label{eq:repeat-often-enough}
\end{equation}
By the second part of Lemma \ref{lem:few-choices}, $\omega\in\UU_\beta$. For any finite word $u$, let $|u|$ denote the length (number of symbols) of $u$. Then $|b_i|=2^{i-1}$ and so $|(b_i\overline{b_i})^{k_i}|=k_i 2^i$. Given $n\in\NN$, let $j$ be the integer such that
$\sum_{i=1}^j k_i 2^i\leq n<\sum_{i=1}^{j+1} k_i 2^i$.
Then $\sigma^n(\omega)$ coincides with $d$ for at most the first $|b_{j+1}|$ digits, and the same is true for $\sigma^n(\bar{\omega})$. Thus, $\rho_\beta(\sigma^n(\omega),d)\geq \beta^{-2^j}$, and it follows from Lemma \ref{lem:bi-Lipschitz} that there is a constant $C>0$ for which
\begin{equation*}
1-\Pi_\beta(\sigma^n(\omega))=|\Pi_\beta(d)-\Pi_\beta(\sigma^n(\omega))|\geq C\beta^{-2^j}, \qquad j\in\NN,
\end{equation*}
and the same with $\bar{\omega}$ in place of $\omega$. On the other hand, $n>k_j$, so $\theta_n\geq\theta_{k_j}$ and \eqref{eq:super-fast-growth} implies
\begin{equation*}
\theta_n\big(1-\Pi_\beta(\sigma^n(\omega))\big)\geq C\beta^{2^j}\to\infty,
\end{equation*}
and the same for $\bar{\omega}$. Finally, since for each $i$ we could replace $k_i$ with $k_i+1$ and $b_i\overline{b_i}\neq b_{i+1}$, there are uncountably many $\omega$ of the form \eqref{eq:repeat-often-enough} with this property.
\end{proof}

\section{Proofs of Theorems \ref{thm:Hausdorff-dimension} and \ref{thm:Hausdorff-dimension-p-mirror}} \label{sec:HD}

Instead of proving Theorem \ref{thm:Hausdorff-dimension} directly, it is notationally simpler to prove a more general and slightly stronger result. To this end, we slightly extend the notation from earlier sections. Let $A$ be any finite alphabet, write $A^*:=\bigcup_{n=1}^\infty A^n$, and let $\Omega:=A^\NN$. Let $0<\lambda<1$ be a constant, and equip $\Omega$ with the metric $\rho(\omega,\eta)=\lambda^{\min\{n:\omega_n\neq\eta_n\}-1}$, for $\omega=\omega_1\omega_2\cdots$ and $\eta=\eta_1\eta_2\cdots$. For any subset $V$ of $A^*$, let $V^\NN$ denote the set of all infinite concatenations $v_1 v_2 v_3\cdots$, where $v_i\in V$ for each $i$. For $v\in A^*$, let $|v|$ denote the length of $v$; that is, $|v|=n$ when $v\in A^n$. If $u,v\in A^*$ and $|u|<|v|$, we say $v$ {\em extends} $u$ if $v_1\cdots v_{|u|}=u$. The following fact is well known; see, for instance, \cite[Proposition 3]{Staiger}.

\begin{theorem} \label{thm:infinite-IFS}
Let $V\subset A^*$ such that no word in $V$ extends any other word in $V$. Then $\dim_H V^\NN$ is the unique real number $s$ such that
\begin{equation*}
\sum_{v\in V}\lambda^{s|v|}=1.
\end{equation*}
\end{theorem}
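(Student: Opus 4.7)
The plan is to establish $\dim_H V^{\NN} = s$ by matching upper and lower bounds via the classical Moran--Hutchinson technique in the symbolic setting. Uniqueness of $s$ is immediate: on its domain of convergence the function $f(t) := \sum_{v \in V}\lambda^{t|v|}$ is strictly decreasing and continuous, with $f(t) \to 0$ as $t \to \infty$ (since $|v| \geq 1$), so the equation $f(t) = 1$ has at most one solution.

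For the upper bound $\dim_H V^{\NN} \leq s$, I would cover $V^{\NN}$ at each level $n$ by the cylinders $[v^{(1)}\cdots v^{(n)}]$ indexed by $V^n$. Each such cylinder has $\rho$-diameter at most $\lambda^{|v^{(1)}|+\cdots+|v^{(n)}|}$, and summing the $s$-th powers of these diameters yields exactly $f(s)^n = 1$. Since each cylinder has length at least $n$, its diameter is at most $\lambda^n \to 0$ as $n \to \infty$, so $\HH^s(V^{\NN}) \leq 1$, giving $\dim_H V^{\NN} \leq s$.

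For the lower bound I would first handle the case of finite $V$: equip $V^{\NN}$ with the Bernoulli product measure $\mu$ having weights $p_v = \lambda^{s|v|}$, so that each level-$n$ cylinder receives mass equal to the $s$-th power of its diameter. Given $x = v^{(1)}v^{(2)}\cdots \in V^{\NN}$ and a radius $r \in [\lambda^k, \lambda^{k-1})$, let $j$ be maximal with $n_j := |v^{(1)}|+\cdots+|v^{(j)}| \leq k$; the prefix-free property forces any element of $B(x,r) \cap V^{\NN}$ to have its parsing begin with $v^{(1)},\ldots,v^{(j)}$, which yields the bound $\mu(B(x,r)) \leq \lambda^{s n_j}$. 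When $V$ is finite, all word lengths are bounded by some $L$, so $k - n_j \leq L$ forces $\mu(B(x,r)) \leq C r^s$ uniformly in $x$ and $r$, and the mass distribution principle then delivers $\dim_H V^{\NN} \geq s$. For infinite $V$, I would exhaust by finite subsets $V_N \uparrow V$: each $V_N$ is prefix-free with $V_N^{\NN} \subseteq V^{\NN}$, so the finite case gives $\dim_H V^{\NN} \geq \dim_H V_N^{\NN} = s_N$ where $\sum_{v \in V_N}\lambda^{s_N |v|} = 1$, and by monotone convergence together with continuity of $f$ we have $s_N \uparrow s$, hence $\dim_H V^{\NN} \geq s$.

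The main obstacle is precisely the infinite case: because word lengths in $V$ may be unbounded, the Bernoulli-measure argument no longer yields a uniform bound on $\mu(B(x,r))/r^s$, so one cannot apply the mass distribution principle directly to $V^{\NN}$. The exhaustion step cleanly circumvents this by reducing everything to the finite Moran theorem, relying only on the monotonicity of $V \mapsto \dim_H V^{\NN}$ and the pointwise monotone convergence $f_N \nearrow f$.
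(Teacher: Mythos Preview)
The paper does not give its own proof of this statement; it records the result as well known and cites \cite[Proposition~3]{Staiger}. Your outline is correct and is the standard Moran--Hutchinson argument in symbolic form: the natural level-$n$ cylinder cover yields $\HH^s(V^\NN)\le 1$ for the upper bound; the Bernoulli measure with weights $\lambda^{s|v|}$ together with the mass distribution principle gives the matching lower bound when $V$ is finite (the prefix-free hypothesis being exactly what forces unique parsing and hence the cylinder containment $B(x,r)\cap V^\NN\subset [v^{(1)}\cdots v^{(j)}]$); and exhausting by finite $V_N\uparrow V$ with $s_N\uparrow s$ handles the general case. Your identification of the obstacle in the infinite case---the lack of a uniform bound on $k-n_j$---is precisely why the direct measure argument fails there and why the exhaustion step is needed.

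It is worth noting that the paper immediately uses your exhaustion conclusion as an input: the proof of Theorem~\ref{thm:general-Hausdorff-dimension} begins by observing that Theorem~\ref{thm:infinite-IFS} ``easily implies'' $\dim_H V^\NN=\sup\{\dim_H W^\NN: W\subset V,\ \#W<\infty\}$, which is exactly what your finite-to-infinite passage establishes.
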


Now let $V=\{v_1,v_2,\dots\}$ be an infinite subset of $A^*$ such that no word in $V$ extends any other. Let
\begin{equation*}
E:=V^\NN\backslash \bigcup_{i=1}^\infty (V\backslash\{v_i\})^\NN.
\end{equation*}
In other words, $E$ is the set of all infinite concatenations $v_{i_1}v_{i_2}\cdots$ of words from $V$ with the property that each $i\in\NN$ occurs at least once in the sequence $(i_1,i_2,\dots)$. The following theorem may well be known, but since the author could not find it in the literature, a proof is included for completeness.

\begin{theorem} \label{thm:general-Hausdorff-dimension}
It holds that $\dim_H E=\dim_H V^\NN$.
\end{theorem}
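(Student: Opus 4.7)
Since $E\subset V^{\NN}$, the inequality $\dim_H E\le\dim_H V^{\NN}=:s$ is immediate; the real task is the matching lower bound. My plan is to exhaust $V$ from below by the finite alphabets $V_n:=\{v_1,\dots,v_n\}$ and inject $V_n^{\NN}$ into $E$ via an \emph{insertion map} that forces the missing letters $v_{n+1},v_{n+2},\dots$ to appear at very sparse word positions. By Theorem~\ref{thm:infinite-IFS} the number $s_n:=\dim_H V_n^{\NN}$ is the unique solution of $\sum_{i=1}^n \lambda^{s_n|v_i|}=1$, and since $t\mapsto \sum_{i=1}^n \lambda^{t|v_i|}$ is strictly decreasing while $\sum_{i=1}^\infty \lambda^{s|v_i|}=1$, one verifies that $s_n\nearrow s$. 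Hence it suffices to show that $\dim_H E\geq s_n-\epsilon$ for arbitrary fixed $n\in\NN$ and $\epsilon>0$.

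Fix such $n$ and $\epsilon$, set $M(j):=\sum_{i=1}^j |v_{n+i}|$ and $L:=\max_{i\leq n}|v_i|$, and choose integers $T_1<T_2<\cdots$ so sparse that $T_j\geq j\,M(j+1)$ for every $j$. Define $\Phi\colon V_n^{\NN}\to V^{\NN}$ by decreeing that the word in position $T_j$ of $\Phi(u_1u_2\cdots)$ is $v_{n+j}$, while the remaining word positions are filled, in order, by $u_1,u_2,\dots$ Then every $\Phi(\omega)$ uses each $v_{n+j}$. Let $\mu$ be the Bernoulli product measure on $V_n^{\NN}$ with weights $p_i:=\lambda^{s_n|v_i|}$, so that $\mu([u_1\cdots u_k])=\lambda^{s_n(|u_1|+\cdots+|u_k|)}$; by Borel--Cantelli, $\mu$-a.e.\ $\omega$ contains each of $v_1,\dots,v_n$ infinitely often, and hence the pushforward $\nu:=\Phi_*\mu$ is supported on a subset of $E$.

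The heart of the argument is a uniform local mass bound for $\nu$. Given $\tilde\omega=\Phi(\omega)$ and $r=\lambda^N$, let $k$ be the number of complete words of $\tilde\omega$ packed into its first $N$ symbols, and let $J(k):=|\{j:T_j\leq k\}|$ be the number of those words that are insertions. Matching the first $N$ symbols of $\Phi(\omega')$ with those of $\tilde\omega$ forces $\omega'$ to agree with $\omega$ on its first $k-J(k)$ words, so accounting for a partial-word overhang of size at most $L+(M(J(k)+1)-M(J(k)))$ gives
\begin{equation*}
\nu\bigl(B(\tilde\omega,r)\bigr)\;\leq\;\mu\bigl([\omega_1\cdots\omega_{k-J(k)}]\bigr)\;\leq\;\lambda^{-s_n L}\,\lambda^{s_n(N-M(J(k)+1))}\;=\;C\,r^{\,s_n(1-M(J(k)+1)/N)}.
\end{equation*}
The uniform lower bound $k\geq J(k)+(N-M(J(k)+1)-L)/L$ forces $J(k)\to\infty$ as $N\to\infty$, uniformly in $\omega$, and the sparsity condition $T_j\geq j\,M(j+1)$ then yields $M(J(k)+1)/N\leq 1/J(k)$. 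Hence for all sufficiently small $r$, $\nu(B(\tilde\omega,r))\leq C'\,r^{s_n-\epsilon}$ uniformly in $\tilde\omega$, and the mass distribution principle gives $\dim_H E\geq \dim_H\mathrm{supp}(\nu)\geq s_n-\epsilon$. Letting $\epsilon\downarrow 0$ and $n\to\infty$ completes the proof.

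The principal technical obstacle is the third paragraph: the inserted words $v_{n+j}$ may grow arbitrarily long, so one must calibrate the sparsity of the positions $T_j$ against the \emph{cumulative} insertion length $M(j+1)$ rather than against $j$ alone, and then track $k$, $J(k)$ and the partial-word overhang carefully enough to convert this into a vanishing relative contribution $M(J(k)+1)/N\to 0$ that is uniform in $\omega\in V_n^{\NN}$. Once that local decay is established, the remainder is a routine application of the mass distribution principle together with the finite-to-infinite approximation $s_n\nearrow s$.
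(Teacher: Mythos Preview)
Your argument is correct and follows essentially the same strategy as the paper: reduce to a finite sub-alphabet, force the remaining words to appear at sparse prescribed positions (with sparsity calibrated against the cumulative inserted length), and apply the mass distribution principle to the resulting measure. The only cosmetic differences are that the paper inserts \emph{all} of $V$ (not just $V\setminus V_n$) at the sparse positions---thereby avoiding your Borel--Cantelli step---and defines the measure directly on cylinders of the image set rather than as a pushforward of the Bernoulli measure on $V_n^{\NN}$; the resulting mass estimates and sparsity conditions are the same.
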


\begin{proof}
Assume the elements of $V$ are ordered such that $|v_1|\leq |v_2|\leq |v_3|\leq \dots$.
Since Theorem \ref{thm:infinite-IFS} easily implies that $\dim_H V^\NN=\sup\{\dim_H W^\NN: W\subset V, \#W<\infty\}$, it suffices to show that $\dim_H E\geq \dim_H W^\NN$ for any finite subset $W$ of $V$. Fix such a set $W$, and let $s:=\dim_H W^\NN$, so that
\begin{equation}
\sum_{v\in W}\lambda^{s|v|}=1.
\label{eq:Moran-equation}
\end{equation}
Fix $0<t<s$, and let $K=(k_l)_{l\in\NN}$ be a strictly increasing sequence of positive integers such that
\begin{equation}
k_l\geq l\left(1+\frac{t|v_l|}{(s-t)|v_1|}\right), \qquad l\in\NN.
\label{eq:K-growth-rate}
\end{equation}
Slightly abusing notation, define, for $k\in\NN$, the index set
\begin{equation*}
I_k:=\begin{cases}
\{i\in\NN: v_i\in W\}, & \mbox{if $k\not\in K$},\\
\{l\}, & \mbox{if $k=k_l\in K$}.
\end{cases}
\end{equation*}
Define the set
\begin{equation*}
E_{W,K}:=\{v_{i_1}v_{i_2}v_{i_3}\cdots: i_k\in I_k\ \mbox{for all $k\in\NN$}\}.
\end{equation*}
Clearly $E\supset E_{W,K}$. We will show that $\dim_H E_{W,K}\geq t$.

For a word $v\in A^*$, define the cylinder set $[v]$ by $[v]:=v\cdot A^\NN$, the set of all sequences in $\Omega$ that begin with $v$. Write
\begin{equation*}
C_{i_1\cdots i_n}:=[v_{i_1}\cdots v_{i_n}], \qquad i_1,\dots,i_n\in\NN.
\end{equation*}
Observe that the above cylinder set has diameter
\begin{equation*}
|C_{i_1\cdots i_n}|=\lambda^{|v_{i_1}|+\dots+|v_{i_n}|}.
\end{equation*}
Put 
\begin{equation*}
I_{W,K}^n:=\bigotimes_{k=1}^n I_k \quad (n\in\NN), \qquad I_{W,K}^*:=\bigcup_{n=1}^\infty I_{W,K}^n.
\end{equation*}
Further, denote
\begin{equation*}
S_n':=\sum_{l\in\NN:k_l\leq n} |v_l|, \qquad N_n:=\#\{1\leq k\leq n: k\not\in K\}, \qquad \mbox{for $n\in\NN$}.
\end{equation*}
We will define a mass distribution $\mu$ on $E_{W,K}$ as follows. For $(i_1,\dots,i_n)\in \NN^n$, set
\begin{equation*}
\mu(C_{i_1\cdots i_n}):=\begin{cases}
|C_{i_1\cdots i_n}|^s \lambda^{-sS_n'} & \mbox{if $(i_1,\dots,i_n)\in I_{W,K}^*$},\\
0 & \mbox{otherwise}.
\end{cases}
\end{equation*}
If $[\omega_1\cdots\omega_m]$ is an arbitrary cylinder set in $\Omega$ which is not of the form $C_{i_1\cdots i_n}$, let $n$ be the largest integer such that $[\omega_1\cdots\omega_m]\subset C_{i_1\cdots i_n}$ for some $i_1,\dots,i_n$, and set
\begin{equation}
\mu([\omega_1\cdots\omega_m]):=\sum\{\mu(C_{i_1\cdots i_n j}): C_{i_1\cdots i_n j}\subset [\omega_1\cdots\omega_m]\},
\label{eq:in-between-cylinders}
\end{equation}
where the summation is over all $j\in\NN$ satisfying the given condition.
Or, if $\omega_1\cdots \omega_m$ cannot be extended to a sequence in $E_{W,K}$, set $\mu([\omega_1\cdots\omega_m])=0$.
We first check the consistency condition
\begin{equation}
\sum_{j=1}^\infty \mu(C_{i_1\cdots i_n j})=\mu(C_{i_1\cdots i_n}), \qquad n\in\NN, \quad i_1,\dots,i_n\in\NN.
\label{eq:consistency}
\end{equation}
Assume $(i_1,\dots,i_n)\in I_{W,K}^*$, as otherwise \eqref{eq:consistency} is trivial.
If $n+1=k_l\in K$, then $S_{n+1}'=S_n'+|v_l|$, and so
\begin{align*}
\mu(C_{i_1\cdots i_n l})&=|C_{i_1\cdots i_n l}|^s \lambda^{-sS_{n+1}'}
=\left(|C_{i_1\cdots i_n}|\lambda^{|v_l|}\right)^s \lambda^{-sS_n'-s|v_l|}\\
&=|C_{i_1\cdots i_n}|^s \lambda^{-sS_n'}
=\mu(C_{i_1\cdots i_n}),
\end{align*}
while for $j\neq l$, $\mu(C_{i_1\cdots i_n j})=0$. Thus, \eqref{eq:consistency} holds when $n+1\in K$. If $n+1\not\in K$, then $S_{n+1}'=S_n'$, so for $j$ such that $v_j\in W$, we have
\begin{equation*}
\mu(C_{i_1\cdots i_n j})=|C_{i_1 \cdots i_n j}|^s \lambda^{-sS_{n+1}'}
=|C_{i_1\cdots i_n}|^s \lambda^{s|v_j|} \lambda^{-sS_n'}
=\mu(C_{i_1\cdots i_n}) \lambda^{s|v_j|},
\end{equation*}
and in this case, \eqref{eq:consistency} follows from \eqref{eq:Moran-equation}.

Now \eqref{eq:consistency}, together with \eqref{eq:in-between-cylinders} and Kolmogorov's consistency theorem, implies that $\mu$ can be extended to a unique Borel probability measure on $\Omega$, and $\mu(E_{W,K})=1$. We claim that
\begin{equation}
\mu(C_{i_1\cdots i_n})\leq |C_{i_1\cdots i_n}|^t,
\label{eq:measure-estimate}
\end{equation}
for all $(i_1,\dots,i_n)\in I_{W,K}^*$. Fix such an $n$-tuple $(i_1,\dots,i_n)$, and set $S_n:=|v_{i_1}|+\dots+|v_{i_n}|$. Observe that
\begin{equation}
S_n\geq S_n'+N_n|v_1|.
\label{eq:sum-lower-estimate}
\end{equation}
Let $l\in\NN$ be such that $k_l\leq n<k_{l+1}$. Then \eqref{eq:K-growth-rate} implies
\begin{equation*}
(s-t)(n-l)|v_1|\geq (s-t)(k_l-l)|v_1|\geq tl|v_l|.
\end{equation*}
Since $S_n'=S_{k_l}'\leq l|v_l|$ and $N_n=n-l$, this gives
\begin{equation*}
(s-t)N_n|v_1|=(s-t)(n-l)|v_1|\geq tl|v_l|\geq tS_n', 
\end{equation*}
which by \eqref{eq:sum-lower-estimate} implies
$(s-t)S_n\geq sS_n'$.
Hence,
\begin{equation*}
|C_{i_1\cdots i_n}|^{s-t}=\lambda^{(s-t)S_n}\leq \lambda^{sS_n'},
\end{equation*}
which yields \eqref{eq:measure-estimate}.

Now let $[\omega_1\cdots\omega_m]$ be an arbitrary cylinder set in $\Omega$. We may assume this set intersects $E_{W,K}$, for otherwise its $\mu$-measure is zero. Let $n$ be the largest integer such that $[\omega_1\cdots\omega_m]\subset C_{i_1\cdots i_n}$ for some $(i_1,\dots,i_n)\in I_{W,K}^n$. If $n+1\not\in K$, then 
\begin{equation*}
\mu([\omega_1\cdots\omega_m])\leq \mu(C_{i_1\cdots i_n})
\leq |C_{i_1\cdots i_n}|^t
\leq\lambda^{-tL}\big|[\omega_1\cdots\omega_m]\big|^t,
\end{equation*}
where $L:=\max\{|v|:v\in W\}$. On the other hand, if $n+1=k_l\in K$, then
\begin{equation*}
\mu([\omega_1\cdots\omega_m])=\mu(C_{i_1\cdots i_n l})
\leq |C_{i_1\cdots i_n l}|^t
\leq \big|[\omega_1\cdots\omega_m]\big|^t.
\end{equation*}
In both cases,
\begin{equation*}
\mu([\omega_1\cdots\omega_m])\leq \lambda^{-tL}\big|[\omega_1\cdots\omega_m]\big|^t=\lambda^{-tL}\lambda^{mt}.
\end{equation*}
Finally, let $U$ be any subset of $\Omega$. Let $m$ be the integer such that $\lambda^m<|U|\leq \lambda^{m-1}$. Then $U$ intersects at most $\#A$ cylinders $[\omega_1\cdots\omega_m]$, and so
\begin{equation*}
\mu(U)\leq (\#A)\lambda^{-tL}\lambda^{mt}<(\#A)\lambda^{-tL}|U|^t.
\end{equation*}
Hence, by the distribution of mass principle (see \cite{Falconer}), $\dim_H E_{W,K}\geq t$. But then also $\dim_H E\geq t$, and letting now $t\uparrow s$ yields $\dim_H E\geq s=\dim_H W^\NN$, as desired.
\end{proof}

It is worth noting that the analog of Theorem \ref{thm:general-Hausdorff-dimension} for {\em finite} $V$ is nearly trivial. To see why, let $s=\dim_H V^\NN$, and let $\mathcal{H}^s$ denote $s$-dimensional Hausdorff measure on $\Omega$. To avoid the trivial case where $\dim_H E=\dim_H V^\NN=0$, assume $\#V\geq 2$. Then $0<\mathcal{H}^s(V^\NN)<\infty$, since $V^\NN$ is the attractor of a finite iterated function system. Write $V=\{v_1,\dots,v_n\}$, and let $F_i:=(V\backslash \{v_i\})^\NN$, $i=1,\dots,n$. Then $\dim_H F_i<s$ and so $\mathcal{H}^s(F_i)=0$, for each $i$, and hence $\mathcal{H}^s(V^\NN\backslash E)=\mathcal{H}^s(\bigcup_{i=1}^n F_i)=0$. But this implies $\mathcal{H}^s(E)>0$, so that $\dim_H E\geq s$. I thank Kenneth Falconer for pointing this out. The above method does not work for infinite $V$, since in general it is possible that $\mathcal{H}^s(V^\NN)=0$. Various sufficient conditions are known under which the attractor of an infinite iterated function system has positive Hausdorff measure in its dimension; see, for instance, Mauldin and Urbanski \cite{MU} or Staiger \cite{Staiger}. But our theorem above holds equally whether or not this is the case.

\begin{proof}[Proof of Theorem \ref{thm:Hausdorff-dimension}]
Statement (i) is immediate from Lemma \ref{lem:containment} and the fact, proved recently by Komornik et al. \cite{KKL}, that the function $\beta\mapsto \dim_H \UU_\beta$ is continuous. For (ii), let $\MM_0=(m_k)$ be the sequence used in the proof of Theorem \ref{thm:when-non-empty}, and apply Theorem \ref{thm:general-Hausdorff-dimension} to the set $V=\{v_1,v_2,\dots\}$, where $v_k=d_1\cdots d_{m_k}^-$ for $k\in\NN$, and $\lambda:=1/\beta$. Statement (iii) follows from Theorem \ref{thm:Hausdorff-dimension-p-mirror}, which is proved below.
\end{proof}

In preparation for the proof of Theorem \ref{thm:Hausdorff-dimension-p-mirror}, we need the following lemma. 

\begin{lemma} \label{lem:binary-tree}
Let $\{a_{i_1,\dots,i_n}: n\in\NN,\ i_1,i_2,\dots\in\{1,2\}\}$ be a binary tree of positive integers defined by $a_1=1$, $a_2=2$, and for $n\in\NN$,
\begin{equation*}
a_{i_1,\dots,i_n,j}=ja_{i_1,\dots,i_n}, \qquad n\in\NN, \quad i_1,\dots,i_n,j\in\{1,2\}.
\end{equation*}
Let $\sigma_{i_1,\dots,i_n}:=1+a_{i_1}+a_{i_1,i_2}+\dots+a_{i_1,i_2,\dots,i_n}$. Let $0<\gamma<1$ be a constant, and define the sum
\begin{equation*}
S_n:=\sum_{i_1,\dots,i_n\in\{1,2\}} \gamma^{\sigma_{i_1,\dots,i_n}}.
\end{equation*}
Then $S_n\to 0$ as $n\to\infty$.
\end{lemma}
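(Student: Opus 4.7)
The plan is to convert $S_n$ into a sum that factors into independent coordinate sums, and then bound it by a convergent infinite product. First I would observe that the recurrence $a_{i_1,\dots,i_n,j}=ja_{i_1,\dots,i_n}$ together with $a_1=1$, $a_2=2$ gives the closed form
\begin{equation*}
a_{i_1,\dots,i_k}=i_1 i_2\cdots i_k = 2^{N_k}, \qquad\mbox{where } N_k:=\#\{l\leq k:i_l=2\}.
\end{equation*}
Consequently, $\sigma_{i_1,\dots,i_n}=1+\sum_{k=1}^n 2^{N_k}$.

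Next I would use the elementary inequality $2^N\geq N+1$ for every integer $N\geq 0$, which yields $\sum_{k=1}^n 2^{N_k}\geq n+\sum_{k=1}^n N_k$. Rearranging the order of summation,
\begin{equation*}
\sum_{k=1}^n N_k=\sum_{k=1}^n\sum_{l=1}^k\mathbf{1}[i_l=2]=\sum_{l=1}^n (n-l+1)\mathbf{1}[i_l=2].
\end{equation*}
Combining these observations and the fact that $0<\gamma<1$,
\begin{equation*}
S_n\leq \gamma^{n+1}\sum_{i_1,\dots,i_n\in\{1,2\}} \gamma^{\sum_{l=1}^n (n-l+1)\mathbf{1}[i_l=2]}.
\end{equation*}

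The key point is that the exponent in the sum is now a sum over the coordinates $l$, so the sum factors:
\begin{equation*}
\sum_{i_1,\dots,i_n\in\{1,2\}}\gamma^{\sum_{l=1}^n (n-l+1)\mathbf{1}[i_l=2]}=\prod_{l=1}^n \bigl(1+\gamma^{n-l+1}\bigr)=\prod_{m=1}^n (1+\gamma^m)\leq \prod_{m=1}^\infty (1+\gamma^m).
\end{equation*}
The infinite product on the right is the familiar Euler product, which converges (to a finite value $P(\gamma)$) for every $\gamma\in(0,1)$. Thus $S_n\leq \gamma^{n+1} P(\gamma)$, and since $\gamma<1$ this upper bound tends to $0$, giving $S_n\to 0$.

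There is no real obstacle here; the only step that requires any thought is spotting that the exponent $\sigma_{i_1,\dots,i_n}$ of a leaf in the tree admits the simple expression $1+\sum 2^{N_k}$, and then using the crude bound $2^N\geq N+1$ to ``linearize'' the exponent so that the resulting sum factors across the coordinates.
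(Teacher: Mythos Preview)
Your proof is correct, and it takes a genuinely different route from the paper's. The paper argues recursively on the first coordinate: writing $S_n=S_n^L+S_n^R$ according to whether $i_1=1$ or $i_1=2$, it shows $S_{n+1}^L=\gamma S_n$ and $S_n^R\leq\gamma^n S_n^L$, whence $S_{n+1}\leq(\gamma+\gamma^{n+2})S_n$; once $n$ is large enough that $\gamma+\gamma^{n+2}<1$, the sequence decays geometrically. Your approach instead exploits the closed form $a_{i_1,\dots,i_k}=2^{N_k}$, linearizes via $2^N\geq N+1$, and then observes that the resulting exponent separates across the coordinates so that the sum factors into the partial Euler product $\prod_{m=1}^n(1+\gamma^m)$. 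Your argument is somewhat more direct and yields the clean explicit bound $S_n\leq\gamma^{n+1}\prod_{m=1}^\infty(1+\gamma^m)$ in one stroke, with no need to pass to large $n$; the paper's recursion, on the other hand, stays closer to the binary-tree structure and would generalize more readily if the branching rule $a_{i_1,\dots,i_n,j}=ja_{i_1,\dots,i_n}$ were replaced by something without such a tidy closed form.
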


\begin{proof}
Define the sums
\begin{equation*}
S_n^L:=\sum_{i_2,\dots,i_n\in\{1,2\}}\gamma^{\sigma_{1,i_2,\dots,i_n}}, \qquad S_n^R:=\sum_{i_2,\dots,i_n\in\{1,2\}}\gamma^{\sigma_{2,i_2,\dots,i_n}},
\end{equation*}
so that $S_n=S_n^L+S_n^R$. Since $a_{1,i_2,\dots,i_n}=a_{i_2,\dots,i_n}$, we have $\sigma_{1,i_2,\dots,i_n}=1+\sigma_{i_2,\dots,i_n}$ and this shows that
\begin{equation}
S_{n+1}^L=\gamma S_n. 
\label{eq:left-sum}
\end{equation}
Furthermore, $a_{2,i_2,\dots,i_n}=2a_{1,i_2,\dots,i_n}$, so $\sigma_{2,i_2,\dots,i_n}=2\sigma_{1,i_2,\dots,i_n}-1$. Clearly $\sigma_{1,i_2,\dots,i_n}\geq n+1$, and so $\sigma_{2,i_2,\dots,i_n}\geq \sigma_{1,i_2,\dots,i_n}+n$. Hence,
\begin{equation}
S_n^R\leq \gamma^n S_n^L.
\label{eq:right-sum}
\end{equation}
From \eqref{eq:left-sum} and \eqref{eq:right-sum} (with $n+1$ in place of $n$ in the latter), it follows that
\begin{equation*}
S_{n+1}=S_{n+1}^L+S_{n+1}^R\leq (1+\gamma^{n+1})S_{n+1}^L
=\gamma(1+\gamma^{n+1})S_n=(\gamma+\gamma^{n+2})S_n.
\end{equation*}
Choose $n_0$ large enough so that $\delta:=\gamma+\gamma^{n_0+2}<1$. Then for each $n\geq n_0$, $S_{n+1}\leq\delta S_n$, from which the result follows.
\end{proof}

\begin{proof}[Proof of Theorem \ref{thm:Hausdorff-dimension-p-mirror}]
Let $\beta$ be a de Vries-Komornik number, and $d:=d(\beta)$.
If $\omega\in\WW_\beta$, then $\limsup_{n\to\infty} \sigma^n(\omega)=d$, so certainly there exists $k\in\NN$ such that $\omega_{k+1}\cdots\omega_{k+p}=d_1\cdots d_p$. Thus, $\WW_\beta$ is covered by countably many similar copies of the set
\begin{equation*}
\mathcal{Y}_\beta:=\{\omega=\omega_1\omega_2\cdots\in\UU_\beta: \omega_1\cdots \omega_p=d_1\cdots d_p\},
\end{equation*}
and it suffices to show that $\dim_H \mathcal{Y}_\beta=0$.

Let $b_m:=d_1\cdots d_{2^{m-1}p}$, $m\in\NN$. Define numbers $\sigma_{i_1,\dots,i_n}$ for $n\in\NN$ and $i_1,\dots,i_n\in\{1,2\}$ as in Lemma \ref{lem:binary-tree}. By Lemma \ref{lem:few-choices}, $\YY_\beta$ is covered at ``level 1" by the two cylinders $C_1:=[b_1\overline{b_1}]$ and $C_2:=[b_2]$. Applying Lemma \ref{lem:few-choices} again, we see that at ``level 2" $\YY_\beta$ is covered by the four cylinders $C_{11}:=[b_1\overline{b_1}b_1]$, $C_{12}:=[b_1\overline{b_2}]$, $C_{21}:=[b_2\overline{b_2}]$ and $C_{22}:=[b_3]$. Continuing this way, we find that at ``level $n$", $\YY_\beta$ is covered by $2^n$ cylinders $C_{i_1,\dots,i_n}$, $i_1,\dots,i_n\in\{1,2\}$, where $C_{i_1,\dots,i_n}$ has depth $p(1+\sigma_{i_1,\dots,i_{n-1}})$, so that
\begin{equation*}
|C_{i_1,\dots,i_n}|=\beta^{-p(1+\sigma_{i_1,\dots,i_{n-1}})}.
\end{equation*}
Now let $s>0$ be given, and apply Lemma \ref{lem:binary-tree} with $\gamma:=\beta^{-ps}$ to obtain
\begin{equation*}
\sum_{i_1,\dots,i_n\in\{1,2\}}|C_{i_1,\dots,i_n}|^s=2\gamma\sum_{i_1,\dots,i_{n-1}\in\{1,2\}}\gamma^{\sigma_{i_1,\dots,i_{n-1}}}\to 0,
\end{equation*}
as $n\to\infty$. Thus, $\dim_H \mathcal{Y}_\beta=0$, as desired.
\end{proof}

\begin{remark}
{\rm
({\em a}) A slight extension of the above argument shows also that $\dim_H \UU_{\beta_c}=0$, since $d:=d(\beta_c)$ is a $p$-mirror sequence. This well-known result was stated for the case $\alpha=1$ in \cite{GlenSid} without a detailed proof, and later for the general case in \cite{KLD}, where Lemma \ref{lem:few-choices} is used but the further details are omitted.

({\em b}) It would be of interest to determine the correct gauge function $h$ for which $0<\mathcal{H}^h(\UU_{\beta_c})<\infty$, where $\mathcal{H}^h$ is the generalized Hausdorff measure induced by $h$. It is clear that an answer to this question would require a substantial refinement of the analysis carried out in the proof of Lemma \ref{lem:binary-tree}.
}
\end{remark}

\section*{Acknowledgment}
The author is grateful to Boris Solomyak for supplying a helpful example that indirectly inspired this work; to Derong Kong for a useful email discussion; and to the referee for many helpful suggestions for improving the presentation of the paper.

\footnotesize

\end{document}